\title{Polynomial upper bound on interior Steklov nodal sets}
\newtheorem{claim}{Claim}[section]
\newtheorem{theorem}{Theorem}[section]
\newtheorem{lemma}{Lemma}[section]
\newtheorem{definition}{Definition}[section]
\newtheorem{remark}{Remark}[section]
\newtheorem{proposition}{Proposition}[section]
\DeclareMathOperator{\diam}{diam}
\DeclareMathOperator{\width}{width}
\author{Bogdan Georgiev}
\address{Max Planck Institute for Mathematics, Vivatsgasse 7, 53111 Bonn, Germany} 
\email{bogeor@mpim-bonn.mpg.de}
\author{Guillaume Roy-Fortin}
\address{Department of Mathematics, Northwestern  University,
Evanston, IL 60208-2370, USA} 
\email{gui@math.northwestern.edu}
\begin{document}
	\maketitle
	\begin{abstract}
		We study solutions of uniformly elliptic PDE with Lipschitz leading coefficients and bounded lower order coefficients. We extend previous results of A. Logunov (\cite{L}) concerning nodal sets of harmonic functions and, in particular, prove polynomial upper bounds on interior nodal sets of Steklov eigenfunctions in terms of the corresponding eigenvalue $ \lambda $.\\
	\end{abstract}
	
	\section{Introduction}
	
	This paper considers non trivial solutions $u$ to the following general second order elliptic equation 
	\begin{equation}\label{eq_main_elliptic}
		Lu := \sum_{i,j=1}^N \frac{\partial}{\partial x_i}\left(a^{ij}(x) \frac{\partial u}{\partial x_j}\right) + \sum_{i=i}^N b^i(x) \frac{\partial u}{\partial x_j} + c(x) u = 0,
	\end{equation}
	in some smooth bounded domain $\Omega \subset \mathbb{R}^n$. We make the following assumptions on the coefficients of $L$:
	\begin{enumerate}
		\item $ L $ is uniformly elliptic, that is for a fixed $ \eta > 0 $ we have
		\begin{equation} \label{eq:Uniformly-Elliptic}
			a^{ij}(x) \xi_i \xi_j \geq \eta |\xi|^2, \quad \forall \xi \in \mathbb{R}^n, x \in \Omega.
		\end{equation}
		
		\item The coefficients of $ L $ are bounded
		\begin{equation} \label{eq:Uniformly-Bounded}
			\sum_{i, j} |a^{ij}(x)| + \sum_i |b^i(x)| +|c(x)| \leq \Lambda , \quad x \in \Omega.
		\end{equation}
		
		\item The leading coefficients are Lipschitz
		\begin{equation} \label{eq:Lipschitz-Coef}
			\sum_{ij} |a^{ij}(x) - a^{ij}(y)| \leq \Gamma |x-y|.
		\end{equation}
	\end{enumerate}
	
	We focus our interest on the relation between the zero set and the local growth properties of a solution $u$.
	
	\subsection{Doubling indices and nodal set}
	
	Given a fixed ball $B$ such that $2B \subset \Omega$, the \textit{doubling index} $N(B)$ is a measure of the local growth of $u$ on $B$ defined by 
	\begin{equation}\label{def:Doubling-Index}
	N(B) := \log \frac{\sup_{2B} |u|}{\sup_{B} |u|}
	\end{equation}
	Here and for the rest of the paper $rB$ is the ball concentric to $B$ and scaled by a factor $r > 0$. As the following simple example shows, the doubling index can be seen as a local generalization of the degree of a polynomial for continuous functions. Letting $u=x^n$ and $B = [-r,r]$, we have 
	\begin{equation*}
	N(B) = \log \frac{\sup_{[-2r, 2r]} |x|^n}{\sup_{[-r,r]} |x|^n} = \log \frac{(2r)^n}{r^n} = n (\log 2).
	\end{equation*}
	Thus, the doubling index indeed recovers the degree up to a constant. We will often write $N(x,r)$ for the doubling index of $u$ on the ball $B(x,r)$.\\

	The \textit{nodal set} of $u$ is simply its zero set $$Z_u = \{ u^{-1}(0) \}.$$ Sparked by the famous conjecture of Yau \cite{Ya1, Ya2} on nodal sets of Laplace eigenfunctions, it is a celebrated problem to try to estimate the Hausdorff measure $\mathcal{H}^{n-1}(Z_u)$ of the nodal set of solutions to various partial differential equations. By the work \cite{HS} of Hardt-Simon, it is known that $\mathcal{H}^{n-1}(Z_u)$ is finite. \\

	The seminal papers by Donnelly-Fefferman \cite{DF} (see also the more recent work \cite{RF1, RF2} of the second author ) highlight how the doubling index can be used to obtain bounds on the size of the nodal set. Our main result is along these lines and extends the work of Logunov \cite{L} for harmonic functions to solutions $u$ of Equation (\ref{eq_main_elliptic}). More precisely, we show that the size of the nodal set of such solutions is controlled by the doubling index in the following way:
	
	\begin{theorem} \label{thm:Main-Thm}
There exist positive numbers $ r_0 = r_0(M, g), c = c(M, g) $ and $ \alpha = \alpha(n) $ such that for any solution $ u $ of equation $ (\ref{eq_main_elliptic}) $ in a domain $ \Omega $ satisfying the conditions $ (\ref{eq:Uniformly-Elliptic}), (\ref{eq:Uniformly-Bounded}), (\ref{eq:Lipschitz-Coef})$, we have
		\begin{equation}
			\mathcal{H}^{n-1}(Z_u \cap Q) \leq c \diam^{n-1}(Q) N^\alpha (Q),
		\end{equation}
		where $ Q \subset B(p, r_0) $ is an arbitrary cube in $\Omega$.
	\end{theorem}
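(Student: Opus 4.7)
My plan is to adapt the framework developed by Logunov in \cite{L} for harmonic functions to the variable coefficient operator $L$. The architecture is an induction on the doubling index $N(Q)$: after subdividing $Q$ into $A^n$ congruent subcubes, one shows that only a controlled proportion of the subcubes can still carry doubling index comparable to $N(Q)$, while on the remaining subcubes the index drops by a definite factor. Iterating this dichotomy over a geometric sequence of scales converts the per-step gain into a polynomial-in-$N$ bound for $\mathcal{H}^{n-1}(Z_u \cap Q)$.

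\textbf{Key ingredients.} Two elliptic analogues of Logunov's statements have to be established. The first is an almost-monotonicity of the doubling index: for $B(x,r) \subset B(x,R)$ contained in a sufficiently small ball, an inequality of the form $N(x,r) \leq (1 + Cr) N(x,R) + C$, with $C = C(\eta, \Lambda, \Gamma)$. The second is a subdivision lemma: there exists $A = A(n)$ such that after partitioning $Q$ into $A^n$ equal subcubes, the number of subcubes whose doubling index exceeds $N(Q)/(1+\delta)$ is at most $A^{n-1-\alpha}$ for explicit constants $\delta, \alpha > 0$. To derive these, I would freeze the leading coefficients at the center of the relevant ball and perform a linear change of variables that reduces the principal part of $L$ to the Euclidean Laplacian. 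By \eqref{eq:Lipschitz-Coef}, the deviation from constant coefficients is of order $r$ on a ball of radius $r$, while by \eqref{eq:Uniformly-Bounded} the lower-order terms contribute only an additive $O(1)$ error to the doubling index. Consequently, on balls contained in $B(p, r_0)$ with $r_0 = r_0(\eta, \Lambda, \Gamma)$ small enough, Logunov's harmonic estimates can be applied to the transformed function and pulled back with explicit perturbative corrections.

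\textbf{Iteration.} Granted the two ingredients, the polynomial bound follows by a now-standard iterative scheme. Starting from $Q \subset B(p, r_0)$, one subdivides repeatedly, sorts the subcubes by doubling index, and bounds the nodal contribution of each class via the Donnelly--Fefferman / Hardt--Simon style estimate which controls $\mathcal{H}^{n-1}(Z_u \cap q)$ by $\diam^{n-1}(q)$ times a function of the local doubling index. The subdivision lemma forces the number of \emph{bad} subcubes at generation $k$ to decay like $A^{k(n-1-\alpha)}$ rather than $A^{k(n-1)}$, and summing the resulting geometric series collapses to an estimate of the form $c \diam^{n-1}(Q) N^{\alpha}(Q)$.

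\textbf{Main obstacle.} The technical heart of the argument is the subdivision lemma. Logunov's proof rests on compactness, on three-ball / quantitative unique continuation inequalities, and on the strict monotonicity of Almgren's frequency function; the last of these is specific to harmonic functions and is not available for solutions of \eqref{eq_main_elliptic}. In our setting, both the frequency monotonicity and the three-ball identities acquire error terms proportional to $\Gamma r$ from the Lipschitz hypothesis and to $\Lambda$ from the lower-order coefficients. The delicate point is to propagate these errors through the combinatorial scheme without inflating the subdivision parameters $A, \delta, \alpha$, so that they remain functions of $n, \eta, \Lambda, \Gamma$ only and are independent of the particular solution $u$. This uniformity is exactly what necessitates the restriction $Q \subset B(p, r_0)$ in the statement.
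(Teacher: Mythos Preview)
Your overall architecture---induction on the doubling index via repeated subdivision into $A^n$ subcubes, a good/bad cube dichotomy, and a geometric-series summation---matches the paper's exactly. The difference lies in how the core subdivision estimate is obtained.

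The paper does \emph{not} freeze coefficients and perturb off the harmonic case. Instead it works intrinsically with the generalized frequency function of Han--Lin (Theorem~\ref{thm:Monotonicity}), which is almost-monotone for solutions of \eqref{eq_main_elliptic} directly, and then proves elliptic analogues of Logunov's two key geometric lemmata from scratch: a \emph{simplex} (barycenter accumulation) lemma, via the scaling Lemma~\ref{lem:Doubling-Scaling}, and a \emph{hyperplane} lemma, via Lin's propagation of smallness for Cauchy data (Lemma~\ref{lem:Propagation-Smallness}). These two combine combinatorially, exactly as in \cite{L}, to give the subdivision estimate (at most $A^{n-1}/2$ bad subcubes).

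Your proposed route has a gap at the step ``Logunov's harmonic estimates can be applied to the transformed function.'' Freezing the leading coefficients and changing variables does \emph{not} produce a harmonic function: you obtain $\Delta v + \tilde b\cdot\nabla v + \tilde c\,v = 0$ with $\tilde b,\tilde c$ small but nonzero, so Logunov's simplex and hyperplane lemmata---stated and proved only for $\Delta u=0$---are not available as black boxes. You would have to reprove them for this perturbed equation, tracking the $O(r)$ and $O(1)$ errors through the frequency-scaling and propagation-of-smallness arguments; once you do that, you have essentially reproduced the paper's direct approach, and the freezing step buys nothing. Note also that Logunov's argument does not use compactness: the simplex lemma is a direct consequence of doubling-index scaling, and the hyperplane lemma uses quantitative unique continuation (Cauchy data propagation), both of which the paper extends to $L$ via the Han--Lin frequency and Lin's estimate rather than via any harmonic approximation.
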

	
	Here, $N(Q)$ is the uniform doubling index of $u$ on a cube $Q$ as defined by 
	\begin{equation}
	N(Q) := \sup_{x \in Q, r \in (0, \text{diam}(Q))} N(x,r).
	\end{equation}

	The proof adapts the machinery developed by Logunov to solutions of more general elliptic equations. In Section \ref{sec:Tool-Box}, we build a toolbox consisting mostly of elliptic estimates and almost monotonicity of a generalized frequency function - see Equation (\ref{eq:frequency-fct}) - that we then use in Section \ref{sec:Additivity-Frequency} to prove our generalized versions of the crucial simplex and hyperplane lemmata. Those two lemmata work together to investigate the additivity properties of the frequency. The underlying principal idea can be roughly summarized as follows: if the frequency of $u$ on a big cube $Q$ is high, then it cannot be high in too many disjoint sub-cubes $q_i \subset Q$.
		
	\subsection{Application: interior nodal sets of Steklov eigenfunctions}
	
		Let $M$ be a smooth, connected and compact manifold of dimension $n \geq 2$ with non-empty smooth boundary $\partial M$ and denote by $\Delta = \Delta_g$ the Laplace-Beltrami operator on $M$. The Steklov eigenfunctions on $M$ are solutions to 
		\begin{equation}\label{Eq:main-Steklov}
			\begin{cases}
				\Delta \phi = 0 &  \text{ in } M,\\
				\partial_\nu \phi = \lambda \phi & \text{ on } \partial M.
			\end{cases}
		\end{equation} 
		
		In this setting, the spectrum is discrete and is composed of the eigenvalues $$ 0 = \lambda_0 < \lambda_1 \leq \lambda_2 \leq ... \nearrow \infty.$$
		Given a Steklov eigenfunction $u = u_\lambda$, we distinguish the codimension $1$ \textit{interior nodal set} 
		\begin{equation}\label{eq:interior-Nodal-set}
			Z_\lambda = \left\{ x \in M : \phi(x) = 0 \right\}
		\end{equation}
		and the codimension $2$ \textit{boundary nodal set}  
		\begin{equation}\label{eq:bdy-Nodal-set}
			N_\lambda = \left\{ p \in \partial M : \phi(p) = 0 \right\}.
		\end{equation}
		As mentioned earlier, we are interested in measuring the size of these nodal sets. It is expected (see \cite{GP}) that their size is controlled by the Steklov eigenvalue. More precisely, it is conjectured that
		\begin{equation}\label{eq_yau_interior} 
			c_1 \lambda \leq \mathcal{H}^{n-1}(Z_\lambda) \leq c_2 \lambda
		\end{equation} and 
		\begin{equation}\label{eq_yau_bdy}
			c_3 \lambda \leq \mathcal{H}^{n-2}(N_\lambda) \leq c_4 \lambda, 
		\end{equation} 
		where $\mathcal{H}^n$ is the $n$-dimensional Hausdorff measure. In the above, the $c_i$ are positive constants that may only depend on the geometry of the manifold $M$. These conjectures are similar to the famous Yau conjecture for nodal sets of 			eigenfunctions of the Laplace operator. We now briefly present the current best results present in the literature, starting with the interior nodal set:

	\begin{table}[h]
	\centering
	\caption{Current best bounds for $\mathcal{H}^{n-1}(Z_\lambda)$}
	\label{my-label}
		\begin{tabular}{c|c|c}
			Regularity and dimension & Current Best Lower Bound         & Current Best Upper Bound      \\ \hline
			$C^\omega$, $n=2$        & $c \lambda$ \cite{PST} $\checkmark$    & $c \lambda$ \cite{PST} $\checkmark$ \\
			$C^\omega$, $n \geq 3$        &  $c \lambda^{\frac{2-n}{2}}$ \cite{SWZ} &  \\ 
			$C^\infty$, $n = 2$   &  c \cite{SWZ} & $c \lambda^\frac{3}{2}$ \cite{Zhu1}  \\       
			$C^\infty$, $n \geq 3$   & $c \lambda^{\frac{2-n}{2}}$ \cite{SWZ} &                              
	\end{tabular}
	\end{table}

	\bigskip
	In the case of the boundary nodal set, we have

	\begin{table}[h]
	\centering
	\caption{Current best bounds for $\mathcal{H}^{n-2}(N_\lambda)$}
	\label{my-label}
		\begin{tabular}{c|c|c}
			Regularity and dimension & Current Best Lower Bound         & Current Best Upper Bound      \\ \hline
			$C^\omega$, $n \geq 2$        &    & $c \lambda$ \cite{Ze} $\checkmark$ \\
			$C^\infty$, $n = 2$   &  $c \lambda$ \cite{WZ} &   \\       
			$C^\infty$, $n \geq 3$   & $c \lambda^\frac{4-n}{2}$ \cite{WZ} &                              
		\end{tabular}
	\end{table}

	We use Theorem \ref{thm:Main-Thm} to provide a polynomial upper bound for interior nodal sets in the smooth case in any dimension $n \geq 2$.

	\begin{theorem} \label{thm:Steklov-Bound}
		 Let $M$ be a smooth, connected and compact manifold of dimension $n \geq 2$ with non-empty smooth boundary $\partial M$. Let $\phi_\lambda $ be a Steklov eigenfunction on $M$ corresponding to the eigenvalue $\lambda$. Then
		\begin{equation}
			\mathcal{H}^{n-1}(Z_\lambda) \leq c \lambda^\alpha,
		\end{equation}
		where $ c = c(M, g) $ and $ \alpha = \alpha(n) $.
	\end{theorem}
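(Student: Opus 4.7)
The strategy is to realize $\phi_\lambda$ as a solution of an elliptic equation of the form \eqref{eq_main_elliptic} on a slightly enlarged manifold $\widetilde{M} \supset M$, combine this with a uniform doubling index bound $N(Q) \leq C(M,g)\,\lambda$, and apply Theorem \ref{thm:Main-Thm} on each cube of a finite cover of $M$.

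\textbf{Extension across $\partial M$.} In the interior of $M$, $\phi_\lambda$ is harmonic for $g$ and hence, in local coordinates, solves a divergence-form equation of type \eqref{eq_main_elliptic} with structural constants $\eta, \Lambda, \Gamma$ depending only on $(M, g)$; Theorem \ref{thm:Main-Thm} applies directly to cubes $Q$ with $2Q \subset M$. To deal with cubes touching $\partial M$ one must extend $\phi_\lambda$ across the boundary. In Fermi coordinates $(y, t)$ near a boundary point (with $t \geq 0$ inside $M$), I construct a reflection-type extension $\tilde u$ to $t < 0$ matching both traces of $\phi_\lambda$ at $\{t=0\}$; the Robin condition $\partial_t \phi_\lambda|_{t=0} = \lambda\, \phi_\lambda|_{t=0}$ is used precisely to make $\tilde u$ of class $C^1$ across the boundary. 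The extension then solves a divergence-form elliptic equation of type \eqref{eq_main_elliptic} in a collar, but with a zero-th order coefficient of magnitude $\lambda$. To restore the $\lambda$-independent bound on $\Lambda$ required by Theorem \ref{thm:Main-Thm}, one either rescales the boundary-normal coordinate $t \mapsto \lambda t$ to absorb the extra factor, or tracks the explicit dependence of the conclusion of Theorem \ref{thm:Main-Thm} on $\Lambda$ and verifies that it is at worst polynomial.

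\textbf{Doubling estimate and assembly.} An Almgren-type frequency monotonicity (or, equivalently, a Carleman inequality) applied to $\tilde u$ on $\widetilde{M}$ yields the classical Steklov growth bound
\begin{equation*}
\sup_{B(x, 2r)} |\phi_\lambda| \leq e^{C(M,g)\,\lambda} \sup_{B(x,r)} |\phi_\lambda|,
\end{equation*}
valid for every $x \in M$ and $r \leq r_0 = r_0(M,g)$, and hence $N(Q) \leq C(M, g)\,\lambda$ for every cube $Q$ of diameter at most $r_0$. Covering $M$ by a finite number $K = K(M,g)$ of such cubes $Q_1, \ldots, Q_K$, Theorem \ref{thm:Main-Thm} applied on each of them gives
\begin{equation*}
\mathcal{H}^{n-1}(Z_\lambda \cap Q_i) \leq c\, r_0^{n-1}\, N^\alpha(Q_i) \leq c'(M,g)\, \lambda^\alpha,
\end{equation*}
and summing over $i$ produces $\mathcal{H}^{n-1}(Z_\lambda) \leq c(M,g)\, \lambda^\alpha$.

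\textbf{Main obstacle.} The substantive work lies in the boundary extension and the concomitant elimination of the $\lambda$-dependence of the coefficients: the reflection extension inevitably carries a lower-order term of size $\lambda$ inherited from the Robin condition, while Theorem \ref{thm:Main-Thm} as stated demands $\Lambda = \Lambda(M,g)$. One either builds an extension whose structural constants are $\lambda$-independent (via a rescaled collar variable) or follows the $\Lambda$-dependence explicitly through the proof of Theorem \ref{thm:Main-Thm}; both routes ultimately give a polynomial power of $\lambda$. The doubling bound $N \leq C\lambda$ is essentially classical for Steklov eigenfunctions, but verifying that $r_0$ does not degenerate as $\lambda \to \infty$ (and in particular, remains bounded below at the boundary) also requires care.
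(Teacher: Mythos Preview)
Your overall strategy---extend across $\partial M$, realize the extension as a solution of \eqref{eq_main_elliptic}, invoke the doubling bound $N\le C\lambda$, and apply Theorem~\ref{thm:Main-Thm} on a cover---matches the paper's. But two of your concrete choices diverge from the paper and one of them is a genuine gap.

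\textbf{Extension.} The paper does not reflect $\phi_\lambda$ directly. It multiplies by the explicit factor $e^{\lambda\delta(x)}$ (with $\delta$ a smoothed boundary distance), which converts the Steklov condition into a Neumann condition; only then does an even reflection across $\partial M$ (doubling $M$ to a closed Lipschitz manifold $\bar M$) produce a genuine weak solution of a divergence-form equation on $\bar M$. Your ``reflection matching both traces'' is vague: an even reflection matches the value but forces $\partial_\nu=0$, and there is no obvious single reflection that matches $\partial_\nu\phi=\lambda\phi$ while still yielding a solution of an elliptic equation with Lipschitz principal part. The $e^{\lambda\delta}$ trick is precisely what makes this step work.

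\textbf{Scaling and cover.} This is the real gap. After extension, the lower-order coefficients satisfy $\|b\|_\infty\lesssim\lambda$ and $\|q\|_\infty\lesssim\lambda^2$, so the $r_0$ of Theorem~\ref{thm:Main-Thm} depends on $\lambda$ and you \emph{cannot} apply the theorem on a fixed finite cover $Q_1,\dots,Q_{K(M,g)}$ of macroscopic cubes. Your proposed fix of rescaling only the normal variable $t\mapsto\lambda t$ destroys uniform ellipticity (the principal symbol becomes anisotropic with ratio $\sim\lambda^2$), so it does not restore a $\lambda$-independent $\Lambda$. The paper instead rescales \emph{isotropically} to wavelength scale, setting $v_{x_0,\lambda}(x)=v(x_0+x/\lambda)$ on $B(0,1)$; this makes all structural constants $\eta,\Lambda,\Gamma$ independent of $\lambda$, but the price is that each such chart covers only a ball of radius $\sim 1/\lambda$ in $M$. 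One then applies Theorem~\ref{thm:Main-Thm} on each rescaled unit cube (where the doubling bound $N\le c_1\lambda$ of Zhu still holds), obtains $\mathcal{H}^{n-1}(Z_\lambda\cap B(x_0,1/\lambda))\le c\,\lambda^{1-n}\lambda^\alpha$, and covers $M$ by $\sim\lambda^{n}$ such balls; the final exponent is $\alpha+1$ rather than $\alpha$, which is of course still polynomial and hence still proves the theorem (with a shifted $\alpha$).
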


	The proof is based on a gluing procedure that transforms $M$ into a compact manifold without boundary. Doing so and working locally then allows to transfer the study of the nodal set of $\phi$ to that of a solution $u$ to the elliptic Equation \ref{eq_main_elliptic}. The details are presented in Section \ref{sec:Steklov}.
	
	\begin{remark}
	Let us finally notice that the methods of this paper could be applied directly to get a similar polynomial upper bound $\lambda^\alpha$ for the nodal sets of Laplace eigenfunctions on a smooth compact $n$-manifold $M$, $n \geq 2$, with smooth boundary.
	\end{remark}
	
	\begin{remark}
	Constants are labeled $c_1, c_2, ...$ and we make the decision to keep track of them throughout the article. Although this makes the notation heavy, this facilitates tracking down the explicit values of these constants. This in turn should make easier the task of getting some upper bound on the exponent $\alpha$ of  Theorem \ref{thm:Main-Thm}, a question that the authors wish to investigate in the future.
	\end{remark}
	
	\subsection{Acknowledgements}
	 The authors are grateful to Werner Ballmann, Alexander Logunov, Eugenia Malinnikova, Iosif Polterovich and Steve Zelditch for comments, valuable discussions and feedback on this manuscript.

	\section{Tool box} \label{sec:Tool-Box}
	
	\subsection{Elliptic estimates}
	
	We recall Theorem 8.24 of \cite{GT} for operators of the type $ L $ as above. For any weak solution $ u \in W^{1,2}$ of $Lu = 0 $ and $\epsilon > 0$, we have the following elliptic estimate
	\begin{equation}\label{eq:Elliptic-Regularity}
		\sup_{B(x, \rho)} |u|^2 \leq c_1 \fint_{B(x, (1+\epsilon)\rho)} u^2,
	\end{equation}
	where $c_1 = c_1(n, L, \epsilon)$. On the other hand, for every continuous function
	\begin{equation}\label{eq:sup-estimate}
		\fint_{B(x, \rho)} u^2 \leq \sup_{B(x, \rho)} |u|^2.
	\end{equation}

	\subsection{Properties of the frequency function}
	
	Let $ w \in W^{1, 2}_{loc} (B_1) $. We define
	\begin{equation}
		H(r) := \int_{\partial B_r} w^2 d \sigma, \quad D(r) := \int_{B_r} |\nabla w|^2 dx,
	\end{equation}
	
	\begin{equation}
		I(r) := \int_{B_r} (|\nabla w|^2 + w (\textbf{b} \cdot \nabla w) + cw^2 ) dx.
	\end{equation}
	
	The generalized frequency $ \beta(r) $ is defined as
	\begin{equation}\label{eq:frequency-fct}
		\beta(r) := \frac{r I(r)}{H(r)}.
	\end{equation}
	
	The frequency $\beta$ enjoys an almost monotonicity property:
	
	\begin{theorem}[cf. Theorem 3.2.1, \cite{HL}, Theorem 3, Proposition 17 \cite{BL}] \label{thm:Monotonicity}
		There exist constants $ c_2, c_3 > 0 $ such that for any $ u \in W^{1, 2}_{loc}(B_1) $ and  $ Lu = 0 $, we have
		\begin{equation}
			\beta(r) \leq c_2 + c_3 \beta(r_0), \quad r \in (0, r_0).
		\end{equation}
		Moreover, if $ r_0 $ is sufficiently small then $ c_3 $ can be taken to be $ 1 + \epsilon, \epsilon > 0 $.
	\end{theorem}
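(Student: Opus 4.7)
My plan is to follow the Garofalo--Lin-style Almgren frequency computation, adapted to the divergence-form operator with Lipschitz leading coefficients and bounded lower-order terms; this is essentially the strategy behind \cite{HL} and \cite{BL}. After an affine change of coordinates I first normalize $a^{ij}(0) = \delta^{ij}$, so that on $B_{r_0}$ one may write $a^{ij}(x) = \delta^{ij} + E^{ij}(x)$ with $|E^{ij}(x)| \leq \Gamma|x|$. This reduces the statement to a perturbation of the standard Euclidean monotonicity, where the only error terms come from $E^{ij}$, from $\textbf{b}$, and from $c$.

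Next I would compute logarithmic derivatives. The coarea formula gives
\[
H'(r) = \frac{n-1}{r}H(r) + 2\int_{\partial B_r} w\,\partial_\nu w\,d\sigma,
\]
while testing the equation $Lw=0$ against $w$ and against $\langle x,\nabla w\rangle$ (the standard Rellich--Pohozaev identity) yields expressions for $\int_{\partial B_r} w\,\partial_\nu w\,d\sigma$ and for $r\int_{\partial B_r}(\partial_\nu w)^2\,d\sigma$ in terms of the interior quantities $I(r)$ and $H(r)$, modulo error terms of three types: boundary/volume contributions of $E^{ij}$ (bounded by $\Gamma r\cdot D(r)$), drift contributions controlled by $\|\textbf{b}\|_\infty$, and potential contributions controlled by $\|c\|_\infty$. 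Assembling the pieces gives an expression for $\beta'(r)$ whose principal part, exactly as in the harmonic case, is a nonnegative Cauchy--Schwarz remainder
\[
\frac{2r}{H(r)^2}\left(\int_{\partial B_r}(\partial_\nu w)^2\,d\sigma \int_{\partial B_r} w^2\,d\sigma - \Big(\int_{\partial B_r} w\,\partial_\nu w\,d\sigma\Big)^2\right)\ \geq\ 0.
\]

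The main obstacle is to control the perturbation terms uniformly in $r$. The drift and potential contributions are handled by Caccioppoli and Poincar\'e-type inequalities so as to absorb $\int_{B_r} w\,(\textbf{b}\cdot\nabla w)$ and $\int_{B_r} c\,w^2$ into a small multiple of $D(r)$ plus a harmless multiple of $H(r)/r$. The Lipschitz error $\int_{B_r}|E^{ij}||\nabla w|^2 \leq \Gamma r\cdot D(r)$ is absorbed using the comparability $|\nabla w|^2 = a^{ij}\partial_i w\,\partial_j w + O(r)|\nabla w|^2$, which produces a multiplicative factor $1 + O(r)$ on the main Cauchy--Schwarz term. Because $\beta$ itself can be negative due to the lower-order terms, the correct quantity to differentiate is $\beta(r) + c_2$ for a suitably large constant; the outcome is a differential inequality of the form
\[
\big(\log(\beta(r) + c_2)\big)' \geq -K,
\]
which upon integration on $(r,r_0)$ yields $\beta(r) + c_2 \leq e^{K(r_0-r)}(\beta(r_0) + c_2)$. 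Choosing $r_0$ small enough makes $e^{Kr_0}$ arbitrarily close to $1$, giving the $1+\epsilon$ constant asserted in the theorem.
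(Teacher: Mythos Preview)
Your sketch is correct and is precisely the Garofalo--Lin argument underlying the references the paper cites. Note that the paper does not give its own proof of this theorem: it simply invokes \cite{HL} and \cite{BL} and adds the one-line observation that, by inspecting the end of the proof in \cite{HL}, the multiplicative constant can be taken as $c_3 = e^{c(r_0 - r)}$. Your final integrated bound $\beta(r) + c_2 \leq e^{K(r_0 - r)}(\beta(r_0) + c_2)$ is exactly that observation, so you have recovered both the cited result and the paper's remark.
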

	
	The second statement of the Theorem can be verified by inspection of the end of the proof in \cite{HL}, noticing that $c_3$ can be chosen to be $e^{c(r-r_0)}$. We also have the following derivation formula (cf. Corollary 3.2.8 in \cite{HL})
	\begin{equation}
		\frac{d}{dr}\left( \log \frac{H(r)}{r^{n-1}} \right) = O(1) + 2 \frac{\beta(r)}{r} \geq - c_4,
	\end{equation}
	where $c_4 = c_4(n) > 0$. As a consequence, we get
	
	
	\begin{lemma}\label{lemma_monotonicity_Hr}
	The function $\displaystyle \frac{e^{c_4r} H(r)}{r^{n-1}}$ is increasing for $r \in (0, r_0)$. 
	\end{lemma}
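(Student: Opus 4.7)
The plan is to simply take the logarithmic derivative of the candidate function and apply the derivation formula stated just above the lemma. Concretely, I would compute
\begin{equation*}
\frac{d}{dr}\log\!\left(\frac{e^{c_4 r}H(r)}{r^{n-1}}\right) = c_4 + \frac{d}{dr}\log\!\left(\frac{H(r)}{r^{n-1}}\right),
\end{equation*}
and then invoke the inequality $\frac{d}{dr}\log(H(r)/r^{n-1}) \geq -c_4$ supplied by the derivation formula. Adding these immediately gives a non-negative derivative, which proves monotonicity on $(0, r_0)$.

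The one subtlety worth addressing explicitly is regularity: the derivation formula is a pointwise statement, but to conclude that the function itself is (non-strictly) increasing I want to know that $\log(e^{c_4 r}H(r)/r^{n-1})$ is, say, absolutely continuous on compact sub-intervals of $(0, r_0)$ so that the fundamental theorem of calculus applies. Since $u \in W^{1,2}_{\mathrm{loc}}$ solves $Lu=0$, elliptic regularity implies $H(r)$ is at least absolutely continuous (in fact smooth away from $r=0$), so this passes from the differential inequality to the integrated monotonicity statement without difficulty.

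I do not anticipate any real obstacle here — the lemma is a one-line corollary of the inequality already quoted from \cite{HL}, essentially just the observation that multiplying by $e^{c_4 r}$ is exactly the integrating factor needed to absorb the $-c_4$ lower bound on the logarithmic derivative of $H(r)/r^{n-1}$.
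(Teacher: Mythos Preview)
Your proposal is correct and matches the paper's approach exactly: the paper simply records the lemma as an immediate consequence of the derivation formula $\frac{d}{dr}\log(H(r)/r^{n-1}) \geq -c_4$, which is precisely the integrating-factor observation you describe. Your remark on regularity is a sensible addition but not something the paper spells out.
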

	
	Now let $0 < R_1 < R_2 <Êr_0$. An integration yields	
	\begin{equation}
		H(R_2) = H(R_1) \left( \frac{R_2}{R_1}\right)^{n-1} \exp \left( O(1)(R_2 - R_1) + 2 \int_{R_1}^{R_2} \frac{\beta(r)}{r} dr \right).
	\end{equation}
	
	Using the almost monotonicity of the frequency, we estimate the integral on the right hand side by 
	\begin{equation}
	\log \left(\frac{R_2}{R_1}\right) c_3^{-1}(\beta(R_1) - c_2) \leq \int_{R_1}^{R_2} \frac{\beta(r)}{r} dr \leq \log \left(\frac{R_2}{R_1}\right)(c_2 + c_3 \beta(R_2)),
	\end{equation}
	
	which, after absorbing the dimensional constants, yields
	\begin{equation}\label{eq:Frequency-Scaling}
		 \left( \frac{R_2}{R_1} \right)^{ c_3^{-1}( \beta(R_1) - c_2)} \leq \frac{H(R_2)}{H(R_1)} \leq c_5 \left( \frac{R_2}{R_1} \right)^{2(c_3 \beta(R_2) + c_2)}.
	\end{equation}

	\subsection{Doubling numbers and scaling}
	
	
	 The main technical tool we need is
	
	\begin{lemma} \label{lem:Doubling-Scaling}
		Let $ \epsilon \in (0, 1) $. There exist positive constants $ c_6 = c_6(\epsilon) $ and $ r_0 = r_0(\epsilon) $, such that for any  $ u \in W^{1,2}(B)$ with $Lu = 0 $, we have
		\begin{equation}
			t^{N(x, \rho) (1-\epsilon) - c_6} \leq \frac{\sup_{B(x, t\rho)} |u|}{\sup_{B(x, \rho)} |u|} \leq t^{N(x, \rho) (1+\epsilon)+c_6},
		\end{equation}
		for any $ \rho > 0, t > 2 $. Furthermore, there is a threshold $ N_0 = N_0 (\epsilon) $, such that if $ N(x, \rho) > N_0 $, then the constant $c_6 $ can be dropped in the above estimate and one has
		\begin{equation}
			t^{N(x, \rho)(1 - \epsilon)} \leq \frac{\sup_{B(x, t\rho)} |u|}{\sup_{B(x, \rho)} |u|} \leq t^{N(x, t\rho)(1+\epsilon) }.
		\end{equation}
	\end{lemma}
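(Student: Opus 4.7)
My plan is to translate the frequency-scaling estimate (\ref{eq:Frequency-Scaling}) for the spherical $L^2$-quantity $H(r)$ into the desired sup-norm scaling, the bridge being an almost two-sided comparison between the doubling index $N(x,\rho)$ and the frequency $\beta(\rho)$. The passage between sup-norms and $H(r)$ is provided by the elliptic estimate (\ref{eq:Elliptic-Regularity}) together with (\ref{eq:sup-estimate}), the identity $\int_{B_r} u^2 = \int_0^r H(s)\,ds$, and the almost monotonicity from Lemma \ref{lemma_monotonicity_Hr}.

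\textbf{Comparing $m(r)^2$ with $r^{1-n}H(r)$.} Writing $m(r) := \sup_{B(x,r)}|u|$, the upper estimate on $m(r)^2$ via (\ref{eq:Elliptic-Regularity}) applied on $B(x,(1+\epsilon_1)r)$ combined with Lemma \ref{lemma_monotonicity_Hr} bounds it by $C\, r^{1-n} H((1+\epsilon_1)r)$, and then the upper side of (\ref{eq:Frequency-Scaling}) allows the pass from $H((1+\epsilon_1)r)$ to $H(r)$ at the cost of a factor $(1+\epsilon_1)^{O(\beta(r))}$ which is close to $1$ for $\epsilon_1$ small. The lower estimate uses $m(r)^2 \geq |B_r|^{-1}\int_{B_r}u^2$ together with the lower side of (\ref{eq:Frequency-Scaling}) (bounding $H(s)$ below by a power of $s/r$ times $H(r)$), yielding $m(r)^2 \gtrsim H(r)/(r^{n-1}\beta(r))$. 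In log-scale,
\[
  \log m(r)^2 = -(n-1)\log r + \log H(r) + O(\log \beta(r) + 1),
\]
with an error that is $t$-independent.

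\textbf{From $N \sim \beta$ to the scaling.} Applying this at radii $\rho$ and $2\rho$ and subtracting gives $2N(x,\rho) = \log(H(2\rho)/H(\rho)) + O(\log \beta(2\rho)+1)$; combining with (\ref{eq:Frequency-Scaling}) at $(\rho,2\rho)$ yields the two-sided comparison $c^{-1}\beta(\rho) - C \leq N(x,\rho) \leq C\beta(2\rho) + C$, which sharpens to $N = \beta \cdot (1+O(\epsilon)) + O(1)$ once $r_0$ is small enough for Theorem \ref{thm:Monotonicity} to give $c_3$ arbitrarily close to $1$. Applying (\ref{eq:Frequency-Scaling}) with $(R_1,R_2) = (\rho, t\rho)$ then gives
\[
  t^{c_3^{-1}(\beta(\rho)-c_2)} \;\leq\; \frac{H(t\rho)}{H(\rho)} \;\leq\; c_5\, t^{2(c_3\beta(t\rho)+c_2)}.
\]
Converting back to sup-norms via the preceding paragraph at the two scales $\rho$ and $t\rho$ separately (the $O(\log \beta)$ losses depend only on the respective scale, so they contribute only to the additive part of the exponent, not the part multiplying $\log t$) and substituting $\beta \sim N$ produces the first form of the claim, with $c_6 = c_6(\epsilon)$ collecting all additive losses.

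For the refined form in which $c_6$ is dropped when $N(x,\rho) > N_0$, observe that $c_6$ is dominated by $\epsilon N(x,\rho)$ once $N_0 := c_6/\epsilon$: rewrite $N(1\pm\epsilon)\pm c_6 = N(1\pm 2\epsilon)$ on $\{N>N_0\}$ and relabel $\epsilon \mapsto \epsilon/2$. The replacement of $N(x,\rho)$ by $N(x,t\rho)$ in the refined upper bound amounts to running the $N \sim \beta$ comparison at scale $t\rho$ rather than $\rho$. The main obstacle is in the first paragraph, namely keeping the multiplicative losses in the passage $m \leftrightarrow H$ --- the slack factor $c_5$, the constants $c_2, c_3$ in (\ref{eq:Frequency-Scaling}), and the term of order $(1+\epsilon_1)^{O(\beta)}$ --- $t$-independent, so that the sole $t$-dependent part of the final exponent comes from (\ref{eq:Frequency-Scaling}) itself. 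The second assertion of Theorem \ref{thm:Monotonicity} is indispensable here: without the ability to take $c_3$ arbitrarily close to $1$, the factor $c_3^{-1}$ would irreparably degrade the multiplier $(1-\epsilon)$ in front of $N$ on the lower bound side.
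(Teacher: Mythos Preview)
Your overall strategy is exactly the paper's: compare $\sup$-norms with $H(r)$ via the elliptic estimate and a ball-average, deduce a two-sided comparison $N\sim\beta$, and then invoke the frequency scaling (\ref{eq:Frequency-Scaling}) between radii $\rho$ and $t\rho$. The refined-form argument (absorb $c_6$ into $\epsilon N$ once $N>N_0=c_6/\epsilon$, after halving $\epsilon$) is also what the paper does.

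There is, however, one genuine gap in your implementation of the $m\leftrightarrow H$ bridge. For the lower bound you write
\[
m(r)^2 \;\geq\; |B_r|^{-1}\!\int_{B_r} u^2 \;\gtrsim\; \frac{H(r)}{r^{n-1}\,\beta(r)},
\]
obtaining the $1/\beta(r)$ factor by integrating $H(s)$ over the \emph{full} interval $[0,r]$ and using (\ref{eq:Frequency-Scaling}) to compare $H(s)$ with $H(r)$. When you apply this at the large radius $r=t\rho$ it produces an additive error $\log\bigl(\beta(t\rho)+1\bigr)$ in the exponent. You then assert that this ``contributes only to the additive part of the exponent'', i.e.\ to $c_6$. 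But $c_6=c_6(\epsilon)$ must be independent of $u$, while $\beta(t\rho)$ is not: almost monotonicity gives only a \emph{lower} bound $\beta(t\rho)\gtrsim\beta(\rho)-c_2$, and there is no upper bound on $\beta(t\rho)$ in terms of $N(x,\rho)$ (the frequency may jump between scales $\rho$ and $t\rho$). So this term cannot be absorbed into $c_6$, nor into $\epsilon N(x,\rho)\log t$.

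The paper sidesteps this by integrating $H$ only over a thin annulus near $\partial B_r$: using Lemma~\ref{lemma_monotonicity_Hr} one gets
\[
m(r)^2 \;\geq\; \frac{1}{|B_r|}\int_{(1-\epsilon)r}^{r} H(s)\,ds \;\geq\; c(\epsilon)\,\frac{H\bigl((1-\epsilon)r\bigr)}{r^{n-1}},
\]
with \emph{no} $\beta$-dependent loss (see (\ref{eq:sup-estimateImp})). Combined with $m(\rho)^2 = e^{-2N(x,\rho)}m(2\rho)^2 \leq c\,e^{-2N(x,\rho)}H\bigl(2\rho(1+\tilde\epsilon)\bigr)/\rho^{n-1}$ and the lower side of (\ref{eq:Frequency-Scaling}) between $2\rho(1+\tilde\epsilon)$ and $(1-\tilde\epsilon)t\rho$, the claimed lower bound follows with all losses genuinely constant. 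Replace your full-ball average by this annulus trick and the rest of your outline goes through.
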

	
	\begin{proof}
		The argument goes along the lines of the Appendix in \cite{L} with appropriate modifications. For completeness we provide the technical details. We prove the following claim.
		
		\begin{claim}
			Suppose $\epsilon > 0$ and $ r_0 > 0$ are sufficiently small. Then
			\begin{equation}
				\beta(p, r(1+\epsilon))(1-100\epsilon) - c_7  \leq N(p,r) \leq \beta(p, 2r(1+\epsilon))(1+100\epsilon) + c_7.
			\end{equation}
		\end{claim}
		
		Using the elliptic estimate (\ref{eq:Elliptic-Regularity}) and Lemma \ref{lemma_monotonicity_Hr}, there exists $\epsilon > 0$ such that
		\begin{equation}\label{eq:Elliptic-RegularityImp}
			\sup_{B(p, r)} |u|^2  \leq c_1 H((1+\epsilon)r) / r^{n-1}.
		\end{equation}
		
		Using Lemma \ref{lemma_monotonicity_Hr}, there holds $H((1- \epsilon) 2r) \leq e^{2 c_4 r}H(2r)$  so that
		\begin{equation} \label{eq:sup-estimateImp}
			\sup_{B(p, 2r)} |u|^2 \geq  \frac{1}{\omega_n (2r)^n} \int_{B(p, 2r)} u^2 \geq \frac{1}{\omega_n (2r)^n} \int_{2r(1-\epsilon)}^{2r} H(\rho) d \rho \geq c_2  \frac{H(2r(1-\epsilon))}{r^{n-1}},
		\end{equation}
		where $\displaystyle c_2(\epsilon, n) = \frac{\epsilon}{\omega_n 2^{n-1}e^{c_4r_0}}$. Using the latter, we estimate the doubling indices as follows
		\begin{equation}
			N(p, r) =  \log \frac{\sup_{B(p, 2r)}|u|}{\sup_{B(p, r)} |u|} \geq \log \frac{ H(2r(1-\epsilon)) }{H(r(1+\epsilon))} + c_8, 
		\end{equation}
		where $\displaystyle c_8 = \log{\frac{c_2}{c_1}}$. The last quotient is controlled via the generalized frequency as given in (\ref{eq:Frequency-Scaling}). Further, assume that $ r_0 $ is sufficiently small, so that $ c_3 = 1 + \epsilon $. Then, we have
		\begin{align}
			 \log \frac{ H(2r(1-\epsilon)) }{H(r(1+\epsilon))} &\geq \log  \left[ \left( \frac{2(1-\epsilon)}{1+\epsilon} \right)^{\frac{\beta(r(1+\epsilon)) - c_2}{1+\epsilon}} \right] \\ &\geq \frac{ \beta(r(1+\epsilon)) - c_2}{1+\epsilon} \log \left( \frac{2(1-\epsilon)}{1+\epsilon}\right)  \\
			& \geq \frac{ \beta(r(1+\epsilon))}{1+\epsilon} \log \left[ \frac{2(1-\epsilon)}{1+\epsilon}\right] - c_9.
		\end{align}
		
		Now, we recall that for small $ r $, the frequency function is "almost non-negative" in the sense that (cf. Corollary 10, \cite{BL})
		\begin{equation}
			\frac{\beta(r)}{r} \geq -c_{10},
		\end{equation}
		where $c_{10} > 0$.
		Thus, for a sufficiently small $ \epsilon > 0 $ we get
		\begin{equation}
			\frac{ \beta(r(1+\epsilon))}{1+\epsilon} \log  \left[ \frac{2(1-\epsilon)}{(1+\epsilon)}\right] - c_9 \geq \beta(r(1+\epsilon))(1-20 \epsilon) - c_{9}.
		\end{equation}
		
		Hence, we obtain 
		\begin{equation}
			N(p, r) \geq \beta(p, r(1+\epsilon)) (1 - 100\epsilon) - c_7.
		\end{equation}
		
		Similarly, one sees
		\begin{equation}
			N(p,r) \leq \beta(p, 2r(1+\epsilon))(1+100\epsilon) + c_7,
		\end{equation}
		
		provided that $ \epsilon $ and $ r_0 $ are sufficiently small. This finishes the proof of the claim.

		We now proceed showing the lower bound in Lemma \ref{lem:Doubling-Scaling}. First, we can assume that $ t $ is bounded away from $ 2 $. Indeed, if $ t \leq 2^{1+\epsilon} $, then as $ t>2 $ we have $ t^{N(x, \rho)(1-\epsilon)} \leq 2^{N(x, \rho)} $. Hence
		\begin{equation}
			\sup_{B(x, t\rho)} |u| \geq \sup_{B(x, 2\rho)} |u| \geq c_{11} 2^{N(x, \rho)} \sup_{B(x, \rho)} |u| \geq t^{N(x, \rho)(1 -\epsilon)} \sup_{B(x, \rho)} |u|,
		\end{equation}
		which gives the lower bound and the additional statement as well. 
		
		So, we assume that $ t > 2^{1+\epsilon} $. Let us also set $ \tilde{\epsilon} := \epsilon/1000 $, so that $ (1-\tilde{\epsilon})t > 2(1 + \tilde{\epsilon}) $. Using the estimates (\ref{eq:Elliptic-RegularityImp}, \ref{eq:sup-estimateImp}), the frequency scaling (\ref{eq:Frequency-Scaling}) and the last claim, we have
		\begin{align}
			\frac{\sup_{B(x, t\rho)} |u|^2}{\sup_{B(x, \rho)} |u|^2} &\geq \frac{ c_2 (t\rho)^{1-n} H((1-\epsilon)t\rho)}{e^{-2N(x,\rho)} \sup_{B(x, 2\rho)} u^2 }  \\ & \geq c_8 \frac{\left( \frac{(1-\tilde{\epsilon})t}{2(1+\tilde{\epsilon})} \right)^{(2N(x, \rho)/(1+100\tilde{\epsilon})(1+\tilde{\epsilon})) - c_9} H(2\rho(1 + \tilde{\epsilon}))}{c_{10}  e^{-2N(x,\rho)} H(2\rho(1+\tilde{\epsilon}))}  \\ & \geq c_{11} e^{2N(x,\rho)} \left( \frac{(1-\tilde{\epsilon})t}{2(1+\tilde{\epsilon})} \right)^{(2N(x, \rho)/(1+100\tilde{\epsilon})(1+\tilde{\epsilon})) - c_9} \\ & \geq c_{12} \left( \frac{(1-\tilde{\epsilon})t}{(1+\tilde{\epsilon})} \right)^{(2N(x, \rho)/(1+100\tilde{\epsilon})(1+\tilde{\epsilon})) - c_{13}} \\ & \geq c_{14} t^{N(x, \rho) (1-\epsilon) - c_6}.
		\end{align}

		This concludes the proof of the lower bound. The upper bound in Lemma \ref{lem:Doubling-Scaling} follows similarly. To show the additional statements in the Lemma, it suffices to take $ \epsilon/2 $ instead of $ \epsilon $ and require that
		\begin{equation}
			N(x, \rho) > \frac{2}{\epsilon} c_6 (\epsilon/2) =: N_0 (\epsilon/2).
		\end{equation}
	\end{proof}
	We will also need the following comparison for doubling numbers at nearby points (cf. Lemma 7.4, \cite{L}).
	\begin{lemma}
		There exists a radius $ r_0 > 0 $ and a threshold $ N_0 $ such that, for $ x_1, x_2 \in B(p, r) $ and a $ \rho > 0 $ such that $ d(x_1, x_2) < \rho < r_0, N(x_1, \rho) > N_0 $, there exists a constant $ c_{15} > 0$ such that
		\begin{equation}
			N(x_2, c_{15} \rho) > \frac{99}{100} N(x_1, \rho).
		\end{equation}
	\end{lemma}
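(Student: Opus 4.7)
The plan is to transfer growth information for $u$ from $x_1$ to $x_2$ via elementary ball inclusions and then apply the doubling-scaling lemma (Lemma~\ref{lem:Doubling-Scaling}) at the point $x_1$, where the hypothesis $N(x_1,\rho)>N_0$ is available.

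First, I would exploit that $d(x_1,x_2)<\rho$ to obtain the inclusions $B(x_1,R-\rho)\subset B(x_2,R)\subset B(x_1,R+\rho)$ for any $R>\rho$. Applying these at the radii $c_{15}\rho$ and $2c_{15}\rho$ (with $c_{15}\ge 2$ to be chosen) gives
\[
\sup_{B(x_2,2c_{15}\rho)}|u|\ge\sup_{B(x_1,(2c_{15}-1)\rho)}|u|, \qquad \sup_{B(x_2,c_{15}\rho)}|u|\le\sup_{B(x_1,(c_{15}+1)\rho)}|u|,
\]
so that, by the definition of the doubling index,
\[
N(x_2,c_{15}\rho)\ge\log\frac{\sup_{B(x_1,(2c_{15}-1)\rho)}|u|}{\sup_{B(x_1,(c_{15}+1)\rho)}|u|}.
\]

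Next, I would apply Lemma~\ref{lem:Doubling-Scaling} at $x_1$ with base radius $\rho$, using the lower bound with $t=2c_{15}-1>2$ and the upper bound with $t=c_{15}+1>2$. For a small $\epsilon>0$, dividing these estimates and taking logarithms produces
\[
N(x_2,c_{15}\rho)\ge N(x_1,\rho)\bigl[(1-\epsilon)\log(2c_{15}-1)-(1+\epsilon)\log(c_{15}+1)\bigr]-c_6\log\bigl((2c_{15}-1)(c_{15}+1)\bigr).
\]

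The final, quantitative step is to tune $c_{15}$ and $\epsilon$ so that the coefficient of $N(x_1,\rho)$ on the right is at least $99/100$, and then to take the threshold $N_0$ large enough (depending on $c_{15},\epsilon,c_6$) that the additive term is dominated by $N(x_1,\rho)/100$, as permitted by the hypothesis $N(x_1,\rho)>N_0$. This last optimization is the main obstacle: as $c_{15}\to\infty$ the bracket improves toward $\log\bigl((2c_{15}-1)/(c_{15}+1)\bigr)$, but there is a competing $\epsilon\log c_{15}$ loss coming from the imperfect scaling in Lemma~\ref{lem:Doubling-Scaling}, so the parameters must be balanced carefully, paralleling the calibration in Lemma~7.4 of~\cite{L}. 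If needed, one may also invoke the sharper $N>N_0$ version of Lemma~\ref{lem:Doubling-Scaling} together with the almost-monotonicity of $\beta$ (hence of $N$) in the radius to tighten the intermediate estimates.
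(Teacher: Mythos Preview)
Your proposal is correct and follows essentially the same route as the paper, which simply defers to Lemma~7.4 of \cite{L}: ball inclusions from $d(x_1,x_2)<\rho$ combined with the doubling--scaling Lemma~\ref{lem:Doubling-Scaling} at $x_1$, then a calibration of $c_{15}$, $\epsilon$, and $N_0$. Your closing remark about invoking the sharper $N>N_0$ form of Lemma~\ref{lem:Doubling-Scaling} (and the near-monotonicity of $N$ in the radius via $\beta$) is exactly what makes the final parameter balancing go through cleanly, so you should commit to that refinement rather than leave it optional.
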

	
	\begin{proof}
		The proof proceeds exactly as in Lemma 7.4, \cite{L}, using Lemma \ref{lem:Doubling-Scaling} above.
	\end{proof}
	
	\section{Additivity of frequency} \label{sec:Additivity-Frequency}
	
	Similarly to \cite{L}, we discuss the accumulation properties of the doubling index. The two main statements of this section are a barycenter estimate and a propagation of smallness result.
	
	
	\subsection{Barycenter accumulation}
	
	Roughly speaking, we will assert the following: suppose that the doubling exponents at the vertices $ \{ x_1, \dots, x_{n+1} \} $ of a simplex are large (i.e. bounded below by a fixed $ N_0 > 0 $). Then, the doubling exponent at the barycenter of the simplex $ x_0 := \frac{1}{n} \sum_{i = 1}^{n+1} x_i $ is bounded below by $ (1 + c) N_0$, where $ c > 0 $ is a fixed constant. Heuristically, the growth "accumulates" at the barycenter. We recall that here $n$ is the dimension of the Euclidean space in which we are working. The proof proceeds via direct use of the frequency properties discussed in Section \ref{sec:Tool-Box}.
	
	\begin{definition}
		Given a simplex $ S := \{ x_1, \dots, x_{n+1} \} $, we define the relative width $ w(S) $ of $ S $ as
		\begin{equation}
			w(S) := \frac{\width(S)}{\diam(S)},
		\end{equation}
		where $ \diam(S) $ is the diameter of $ S $ and $ \width(S) $ is the smallest possible distance between two parallel hyperplanes, containing $ S $ in the region between them.
	\end{definition}
	
	Further on, we will consider simplices $ S $ whose relative width is bounded below as $ w(S) \geq w_0 := w_0(n) > 0 $ - the specific bound $ w_0 $ will be specified later.
	
	Now, in order to apply the scaling of frequency we will need the following covering lemma.
	
	\begin{lemma} \label{lem:Covering-Lemma}
		There exist a constant $ \alpha := \alpha(n, w_0) > 0 $ and a radius $ \rho := \rho(n, w_0) $ with $ K := \frac{\rho}{\diam(S)} \geq \frac{2}{w_0} $ such that
		\begin{equation}
			B(x_0, (1+\alpha) \rho) \subset \cup_{i=1}^{n+1} B(x_i, \rho).
		\end{equation}
		Moreover, for $ t > 2 $ there exists $ \delta(t) \in (0, 1) $ with $ \delta(t) \rightarrow 0 $ as $ t \rightarrow \infty $, so that
		\begin{equation}
			B(x_i, t \rho) \subset B(x_0, (1+\delta)t \rho).
		\end{equation}
	\end{lemma}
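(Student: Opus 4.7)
The statement is purely Euclidean: no frequency or PDE input enters. The idea is that the relative width hypothesis forces, for every unit direction $v$, some vertex $x_i$ to lie sufficiently far from the barycenter $x_0$ along the direction $v$; consequently $B(x_i, \rho)$ will cover $x_0$ and reach past it by a margin $\alpha \rho$.

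Set $y_i := x_i - x_0$, so $\sum_i y_i = 0$ and $|y_i| \leq d := \diam(S)$. The crucial elementary fact to establish is that, for every unit vector $v$,
\begin{equation*}
	\max_i \langle v, y_i \rangle \;\geq\; \frac{w_0\, d}{n+1}.
\end{equation*}
This reduces to the following observation: given $n+1$ real numbers $a_1, \dots, a_{n+1}$ with $\sum a_i = 0$, setting $M = \max a_i$ and $m = \min a_i$, the identity $|m| = \sum_{i \neq \mathrm{argmin}} a_i \leq n M$ combined with $M - m \geq w_0 d$ (the width hypothesis applied to the linear form $\langle v, \cdot \rangle$) yields $(n+1) M \geq w_0 d$.

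Next, I would set $\rho := K d$ with $K = K(n, w_0)$ to be fixed large; concretely, any $K > (n+1)/w_0$ will suffice, which in particular implies $K \geq 2/w_0$. For $z = x_0 + s v$ with $|v|=1$, choosing $i$ to maximize $\langle v, y_i \rangle$ gives
\begin{equation*}
	|z - x_i|^2 \;=\; s^2 - 2 s \langle v, y_i \rangle + |y_i|^2 \;\leq\; s^2 - \frac{2 w_0 d}{n+1}\, s + d^2.
\end{equation*}
Requiring the right-hand side to be $\leq \rho^2 = K^2 d^2$ and completing the square in $s$ shows that $z \in B(x_i, \rho)$ holds for all $s \leq (1+\alpha)\rho$ with a strictly positive $\alpha = \alpha(n,w_0)$, provided $K$ is chosen as above (a direct one-variable calculation); this establishes the first inclusion. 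For the second, the triangle inequality immediately gives, for $z \in B(x_i, t\rho)$,
\begin{equation*}
	|z - x_0| \;\leq\; |z - x_i| + |x_i - x_0| \;\leq\; t\rho + d \;=\; t\rho \Bigl(1 + \tfrac{1}{tK}\Bigr),
\end{equation*}
so $\delta(t) := 1/(tK) \to 0$ as $t \to \infty$. The whole argument is elementary Euclidean geometry; the only real care needed is bookkeeping to keep the three constants $K$, $\alpha$, $\delta$ mutually consistent as functions of $n$ and $w_0$.
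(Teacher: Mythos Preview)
The paper states this covering lemma without proof (it is treated as an elementary Euclidean auxiliary fact, presumably inherited from the corresponding step in \cite{L}), so there is no argument to compare against. Your proof is correct. The key step---that the width lower bound forces $\max_i \langle v, x_i - x_0\rangle \geq w_0 d/(n+1)$ for every unit $v$---is exactly what is needed, and your derivation via $\sum a_i = 0$, $|m|\le nM$, $M-m\ge w_0 d$ is clean. The quadratic computation for the first inclusion and the triangle-inequality argument for the second (yielding $\delta(t)=1/(tK)$) are both fine. One cosmetic remark: when you write ``the identity $|m| = \sum_{i\ne \mathrm{argmin}} a_i$'' you are implicitly using $m<0$, which holds since $\sum a_i=0$ and the $a_i$ cannot all vanish (that would force zero width in direction $v$); it is worth saying this explicitly. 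Also note that the paper's formula $x_0 = \tfrac{1}{n}\sum x_i$ is a typo for $\tfrac{1}{n+1}\sum x_i$; your use of $\sum y_i=0$ relies on the correct normalization.
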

	
	The main result of this subsection is the following proposition.
	\begin{proposition}
		Let $ \{B_i\}_{i=1}^{n+1} $ be a collection of balls centered at the vertices $ \{x_i\}_{i=1}^{n+1} $ of the simplex $ S $ and radii not exceeding $ \frac{\rho}{2} $, where $ \rho = \rho(n, w_0) $ comes from Lemma \ref{lem:Covering-Lemma}. Then, there exist positive constants $ c := c(n, w_0), C := C(n, w_0) \geq K, r := r(w_0, L)$ and $ N_0 := N_0(w_0, L) $ with the following property:
		
		If $ S \subset B(p, r) $ and if $ N(B_i) > N > N_0, i = 1, \dots n+1 $, then
		\begin{equation}
			N(x_0, C \diam S) > (1+c) N.
		\end{equation}
	\end{proposition}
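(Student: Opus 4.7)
The plan is to argue by contradiction, using Lemma \ref{lem:Covering-Lemma} to sandwich the growth of $u$ near $x_0$ between growths at some vertex, and quantifying the squeeze via Lemma \ref{lem:Doubling-Scaling}. Set $R_0 := (1+\alpha)\rho = (1+\alpha) K \diam(S)$, so that $C := (1+\alpha)K \geq K$, and suppose for contradiction that $N(x_0, R_0) \leq (1+c) N$ for some $c > 0$ to be chosen below. The first part of Lemma \ref{lem:Covering-Lemma} immediately yields a vertex index $i_0$ with
$$\sup_{B(x_0, R_0)} |u| \leq \sup_{B(x_{i_0}, \rho)} |u|.$$
To convert the hypothesis $N(B_{i_0}) = N(x_{i_0}, r_{i_0}) > N$ (at the small scale $r_{i_0} \leq \rho/2$) into a bound at the larger scale $\rho$, I would invoke the $N$-to-$\beta$ comparison established inside the proof of Lemma \ref{lem:Doubling-Scaling}, together with the almost monotonicity of $\beta$ (Theorem \ref{thm:Monotonicity}, with the ambient radius small enough that $c_3 = 1+\epsilon$); this chain delivers $N(x_{i_0}, \rho) \geq N(1 - O(\epsilon)) - O(1)$, which exceeds $N_0$ once $N_0$ is large enough, so the threshold form of Lemma \ref{lem:Doubling-Scaling} becomes available at $x_{i_0}$.

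For a parameter $T > 2$ to be chosen shortly, the second part of Lemma \ref{lem:Covering-Lemma} provides $B(x_{i_0}, T\rho) \subset B(x_0, (1+\delta(T)) T\rho)$, while the threshold form of Lemma \ref{lem:Doubling-Scaling} at $x_{i_0}$ gives $\sup_{B(x_{i_0}, T\rho)}|u| \geq T^{N(x_{i_0}, \rho)(1-\epsilon)} \sup_{B(x_{i_0}, \rho)}|u|$. Chaining these with the covering inequality above produces the lower bound
$$\frac{\sup_{B(x_0, (1+\delta(T))T\rho)}|u|}{\sup_{B(x_0, R_0)}|u|} \geq T^{N(1 - O(\epsilon))}.$$
On the other hand, the contradiction assumption combined with the upper bound in Lemma \ref{lem:Doubling-Scaling} applied at $x_0$ (valid for $T$ large enough that $(1+\delta(T))T > 2(1+\alpha)$) gives
$$\frac{\sup_{B(x_0, (1+\delta(T))T\rho)}|u|}{\sup_{B(x_0, R_0)}|u|} \leq \left(\frac{(1+\delta(T))T}{1+\alpha}\right)^{(1+c)N(1+\epsilon) + c_6}.$$

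Comparing the two displays, taking logarithms and dividing by $N \log T$, one is led to
$$1 - O(\epsilon) \leq (1+c)(1+\epsilon)\left(1 - \frac{A(T)}{\log T}\right) + \frac{O(1)}{N \log T},$$
where $A(T) := \log\bigl((1+\alpha)/(1+\delta(T))\bigr)$. The source of the desired gain $(1+c)$ is the positivity of $A(T)$: since $\alpha > 0$ is a fixed constant from Lemma \ref{lem:Covering-Lemma} while $\delta(T) \to 0$ as $T \to \infty$, fixing $T = T_0$ large enough that $\delta(T_0) < \alpha/2$ makes $A(T_0)$ a strictly positive constant depending only on $n$ and $w_0$. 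Choosing $\epsilon$ small, then $c$ small (for instance $c := A(T_0)/(3 \log T_0)$), and finally $N_0$ large, the displayed inequality fails, producing the required contradiction. The main technical subtlety is the first step: propagating the doubling bound from the small scale $r_{i_0}$ to $\rho$ via the $N$-to-$\beta$ sandwich without eroding the $(1+c)$ gain, which forces a careful tuning of the parameters $\epsilon, c, N_0$ in this order.
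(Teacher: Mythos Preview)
Your overall strategy matches the paper's: use the covering lemma to trap $\sup_{B(x_0,(1+\alpha)\rho)}|u|$ below $\sup_{B(x_{i_0},\rho)}|u|$, push the latter up to scale $T\rho$ via the lower half of Lemma~\ref{lem:Doubling-Scaling}, contain that in $B(x_0,(1+\delta)T\rho)$, and exploit the gap between $\alpha$ and $\delta(T)$ to manufacture the factor $(1+c)$. The paper runs this directly and reads off a lower bound for $N(x_0,(1+\delta)T\rho)$; you run it by contradiction. That difference is cosmetic.

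The genuine problem is your choice of $C$. You set $C\diam S = R_0 = (1+\alpha)\rho$ and assume $N(x_0,R_0)\le(1+c)N$; you then feed this into the \emph{upper} bound of Lemma~\ref{lem:Doubling-Scaling} at $x_0$ to get the exponent $(1+c)N(1+\epsilon)+c_6$. But the upper bound in that lemma is governed by the doubling index at the \emph{outer} radius, here $(1+\delta(T))T\rho$, not at the inner radius $R_0$. (The first display of Lemma~\ref{lem:Doubling-Scaling} in the paper reads $N(x,\rho)$ on both sides, but this is a typo: the threshold version just below uses $N(x,t\rho)$ for the upper bound, and the paper's own proof of the present proposition explicitly writes $N(x_0,t(1+\delta)\rho)$ in the exponent. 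The inner-radius form is in fact false, since nothing prevents the frequency from being much larger at scale $T\rho$ than at scale $\rho$.) Your contradiction hypothesis therefore says nothing about the exponent you need. The fix is immediate: take $C := (1+\delta(T_0))T_0 K$ rather than $(1+\alpha)K$, so that the contradiction hypothesis lives at the outer scale; with that change your inequalities line up and the argument becomes identical to the paper's direct computation.
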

	
	\begin{proof}
		First, Lemma \ref{lem:Doubling-Scaling} shows that by taking larger balls, the doubling exponents essentially increase, so we can assume that all balls $ B_i $ have the radius $ \rho $.
		
		Let us set
		\begin{equation}
			M := \sup_{\cup_{i=1}^{n+1} B(x_i, \rho)} |u|,
		\end{equation}
		
		and let us suppose that $ M $ is achieved on the ball $ B(x_{i_0}, \rho) $ for a fixed index $ i_0 $.
		
		In particular, by Lemma \ref{lem:Covering-Lemma} we have
		\begin{equation}
			\sup_{B(x_0, (1+\alpha) \rho)} |u| \leq M.
		\end{equation}
		
		Further, let us introduce parameters $ t > 2, \epsilon > 0 $ to be specified below and assume that the second statement in Lemma \ref{lem:Doubling-Scaling} holds for the ball $ B(x_{i_0}, t \rho) $, by which we see
		\begin{equation}
			\sup_{B(x_{i_0}, t \rho)} |u| \geq M t^{N(1-\epsilon)}.
		\end{equation}
		
		Moreover, assuming that the scaling in Lemma \ref{lem:Doubling-Scaling} holds at the barycenter $ x_0 $ and recalling Lemma \ref{lem:Covering-Lemma}, we conclude
		\begin{align}
			\left( \frac{t(1+\delta)}{1+\alpha} \right)^{N(x_0, t(1+\delta) \rho) (1+\epsilon) + c_6} &\geq \frac{\sup_{B(x_0, t(1+\delta) \rho)} |u|}{\sup_{B(x_0, (1+\alpha) \rho)} |u|} \geq \frac{\sup_{B(x_{i_0}, t \rho)} |u|}{\sup_{B(x_0, (1+\alpha) \rho)} |u|} \\ &\geq \frac{M t^{N(1-\epsilon)}}{M} = t^{N(1-\epsilon)}.
		\end{align}
		
		Specifying the parameters, we select $ t > 2 $ large enough to ensure $ \delta(t) \leq \frac{\alpha}{2} $, and hence
		\begin{equation}
			\frac{t(1+\delta)}{1+\alpha} \leq t^{1 - \gamma},
		\end{equation}
		for some $ \gamma = \gamma(t, \alpha) \in (0, 1) $. Thus, putting the last estimates together we see
		\begin{equation}
			t^{(1-\gamma) N(x_0, t(1+\delta) \rho) (1+\epsilon) + c_6} \geq t^{N(1-\epsilon)}
		\end{equation}
		and therefore
		\begin{equation}
			N(x_0, t(1+\delta) \rho) \geq \frac{1-\epsilon}{(1+\epsilon)(1-\gamma)} N - c_6.
		\end{equation}
		Selecting an $ \epsilon = \epsilon(\gamma) > 0 $ we can arrange that
		\begin{equation}
			\frac{1-\epsilon}{(1+\epsilon)(1-\gamma)} > 1 + 2 c,
		\end{equation}
		for some $ c := c(\gamma) > 0 $. Hence, we conclude
		\begin{equation}
			N(x_0, t(1+\delta) \rho) \geq N(1+2c) - c_6 \geq (1+c)N + (c N_0 - c_6) > (1+c)N,
		\end{equation}
		provided that $ N_0 $ is sufficiently big.
	\end{proof}
	\subsection{Propagation of smallness}
	
		We use propagation of smallness to derive estimate on the doubling exponents. The main auxiliary result in this discussion is the propagation of smallness for Cauchy data.
		
		\begin{lemma}[cf. Lemma 4.3, \cite{Li}] \label{lem:Propagation-Smallness}
			Let $ u $ be a solution of $ (\ref{eq_main_elliptic}) $ in the half-ball $ B^+_1 $ where the conditions $ (\ref{eq:Uniformly-Elliptic}), (\ref{eq:Uniformly-Bounded}), (\ref{eq:Lipschitz-Coef})$ are satisfied. Let us set
			\begin{equation}
				F := \{ (x', 0) \in \mathbb{R}^n| x' \in \mathbb{R}^{n-1}, |x'| < \frac{3}{4} \}.
			\end{equation}
			
			If the Cauchy conditions
			\begin{equation}
				\|u\|_{H^1(F)} + \| \partial_n u \|_{L^2(F)} \leq \epsilon < 1 \quad  \text{and} \quad \|u\|_{L^2( B^+_1)} \leq 1.
			\end{equation}
			are satisfied, then
			\begin{equation}
				\| u \|_{L^2(\frac{1}{2} B^+_1)} \leq c \epsilon^\beta,
			\end{equation}
			where the constants $ c, \beta $ depend on $ n, \eta, \Lambda, \Gamma $.
		\end{lemma}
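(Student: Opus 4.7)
The plan is to reduce the claim to an interior three-ball (propagation-of-smallness) inequality by extending $u$ across the flat piece $F$, and then to convert the smallness of the Cauchy data into smallness of the extended function on a small ball sitting in the lower half-ball.

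First, some preliminary reductions. By interior elliptic regularity $(\ref{eq:Elliptic-Regularity})$, the hypothesis $\|u\|_{L^2(B_1^+)}\le 1$ upgrades to $\|u\|_{H^1(rB_1^+)}\le C$ for any fixed $r<1$; this is needed to control interface terms. A $C^{1,1}$ change of coordinates (admissible since the leading coefficients are Lipschitz) lets me additionally assume $a^{in}(x',0)=\delta^{in}$ on $F$, without affecting the structural constants $\eta,\Lambda,\Gamma$ up to absolute factors.

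Second, I extend $u$ across $F$ into the lower half-ball $B_1^-$. Define $\tilde u$ on $B_1^-$ as the weak solution of $\tilde L\tilde u=0$ with reflected coefficients (even reflection of $a^{ij}$ on the tangential-tangential and normal-normal blocks, odd reflection on the mixed blocks, so that the normalization above makes the resulting coefficients continuous across $F$ and uniformly elliptic), subject to the Dirichlet conditions $\tilde u=u$ on $F$ and $\tilde u=0$ on $\partial B_1\cap\{x_n<0\}$. Standard energy estimates give $\|\tilde u\|_{H^1(B_1^-)}\le C\|u\|_{H^{1/2}(F)}\le C\epsilon$. Letting $\hat L$ denote the reflected operator on all of $B_1$ and setting $\hat u:=u$ on $B_1^+$, $\hat u:=\tilde u$ on $B_1^-$, a direct integration-by-parts computation shows that $\hat L\hat u$ is a surface distribution supported on $F$, with
\begin{equation*}
\|\hat L\hat u\|_{H^{-1}(B_1)} \leq \|\partial_n u\|_{L^2(F)} + \|\partial_n \tilde u\|_{L^2(F)} \leq C\epsilon,
\end{equation*}
while $\|\hat u\|_{L^2(B_1)}\le C$.

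Third, I apply a three-ball inequality for the perturbed equation $\hat L v=g$, available by combining the almost monotonicity of the frequency of Theorem \ref{thm:Monotonicity} with a routine perturbative argument that absorbs the $H^{-1}$ source into the small-ball norm: for concentric balls $B_{r_1}\subset B_{r_2}\subset B_{r_3}\subset B_1$ around a point $x_0$ chosen slightly below $F$, with $B_{r_1}(x_0)\subset B_1^-$, $\tfrac12 B_1^+\subset B_{r_2}(x_0)$, and $r_3$ just smaller than the distance from $x_0$ to $\partial B_1$, one obtains
\begin{equation*}
\|u\|_{L^2(\tfrac12 B_1^+)} \leq \|\hat u\|_{L^2(B_{r_2})} \leq C\bigl(\|\hat u\|_{L^2(B_{r_1})} + \|\hat L\hat u\|_{H^{-1}}\bigr)^{\theta} \|\hat u\|_{L^2(B_{r_3})}^{1-\theta} \leq C\epsilon^{\theta},
\end{equation*}
for some $\theta=\theta(r_1,r_2,r_3)\in(0,1)$, since $\|\hat u\|_{L^2(B_{r_1})}\le\|\tilde u\|_{L^2(B_1^-)}\le C\epsilon$. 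This yields the conclusion with $\beta:=\theta$.

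The main obstacle I expect is the verification that the reflected operator $\hat L$ preserves the structural hypotheses across $F$, that is, uniform ellipticity and Lipschitz leading coefficients; this is the sole purpose of the normalization $a^{in}(x',0)=\delta^{in}$ in the first step, which makes the odd-reflected mixed coefficients vanish (hence continuous) on $F$ and keeps the reflected matrix positive definite. Once this is in place, the three-ball inequality and the absorption of the interface source are standard consequences of the frequency machinery developed in Section \ref{sec:Tool-Box}.
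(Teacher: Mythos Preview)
The paper does not prove this lemma; it is simply quoted from Lin~\cite{Li}, Lemma~4.3, where it is obtained via Carleman estimates with a suitably chosen singular weight. So there is no in-paper argument to compare against beyond that citation.

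Your extension-plus-three-balls route is a legitimate alternative to Carleman estimates, and the overall architecture is sound, but two steps are glossed over in a way that hides real work. First, the Dirichlet problem you pose for $\tilde u$ in $B_1^-$ specifies data on $F=\{|x'|<3/4\}$ and on the spherical cap, but says nothing about the annulus $\{x_n=0,\ 3/4\le |x'|<1\}$. Whatever you put there, the glued function $\hat u$ will in general fail to be $H^1$ (or will fail to be $O(\epsilon)$) across that annulus; the fix is to choose the outer ball $B_{r_3}(x_0)$ so that it meets $\{x_n=0\}$ only inside $F$, but this is a genuine geometric constraint that forces $r_3$ fairly close to $r_2$ and should be checked explicitly rather than asserted. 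Second, the three-ball inequality with an $H^{-1}$ source that you invoke is not part of the paper's toolbox: Theorem~\ref{thm:Monotonicity} and the scaling~(\ref{eq:Frequency-Scaling}) treat only the homogeneous equation. The perturbation you have in mind (solve $\hat L v=\hat L\hat u$ with $\|v\|_{H^1}\lesssim\epsilon$, apply the homogeneous three-ball inequality to $w=\hat u-v$) is indeed routine once written out, but it should be stated rather than buried in a parenthetical. With these two points made explicit your argument goes through, with $\beta$ equal to the three-ball exponent $\theta$; compared to Lin's proof, you trade the construction of a Carleman weight for the reflection/extension bookkeeping and the availability of an interior three-sphere inequality, which the frequency machinery of Section~\ref{sec:Tool-Box} does supply.
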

		
		It is convenient to introduce the following doubling index.
		\begin{definition}
			The (uniform) doubling index $ N(Q) $ of a cube $ Q $ is defined as
			\begin{equation}
				N(Q) := \sup_{x \in Q, r \in (0, \diam(Q))} N(x, r).
			\end{equation}
		\end{definition}
		
		This doubling index enjoys the following monotonicity property
		\begin{equation}
			N(q) \leq N(Q), \quad \text{if} \quad q \subseteq Q.
		\end{equation}
		Also, if $ Q \subseteq \cup_i Q_i $ with $ \diam(Q) \leq \diam(Q_i) $, then there exists an index $ i_0 $ such that
		\begin{equation}
			N(Q) \leq N(Q_{i_0}).
		\end{equation}
		
		The next proposition roughly asserts that if a bunch of sub-cubes around a hyperplane all have a high doubling index, then a larger cube containing them must also have a big doubling index.
		
		\begin{proposition} (cf. Lemma 4.1, \cite{L})
			Let $ Q $ be a cube $ [-R, R] $ in $ \mathbb{R}^n $ and let us divide $ Q $ into $ (2A + 1)^n $ equal sub-cubes $ q_i $ with side-length $ \frac{2R}{2A+1} $. Let $ \{q_{i, 0}\} $ be the collection of sub-cubes which intersect the hyperplane $ \{ x_n = 0 \} $ and suppose that there exist centers $ x_i \in q_{i, 0} $ and radii $ r_i < 10 \diam(q_{i,0}) $ so that $ N(x_i, r_i) > N $ where $ N $ is fixed. Then there exist constants $ A_0 = A_0(n), R_0 = R_0(L), N_0 = N_0(L) $ with the following property:
			
			If $ A > A_0, N > N_0, R < R_0 $, then
			\begin{equation}
				N(Q) > 2N.
			\end{equation}
		\end{proposition}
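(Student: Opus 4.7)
The plan is to argue by contradiction: assume $N(Q) \le 2N$ and derive a bound of the form $\sup_{(1/2)Q^-}|u| \ll \sup_Q |u|$ via propagation of smallness, which will contradict the doubling assumption. Write $S := \sup_Q |u|$, $F := Q \cap \{x_n = 0\}$ and $Q^- := Q \cap \{x_n \le 0\}$. The first step converts the high doubling at the $x_i$ into $L^\infty$-smallness of $u$ on a tubular neighborhood of $F$. Since $r_i < 10\,\diam(q_{i,0}) \asymp R/A$ and $N > N_0$, the threshold form of Lemma~\ref{lem:Doubling-Scaling} applied with a dilation factor $T \asymp A$ chosen so that $B(x_i, T r_i) \subset 2Q$ gives
\begin{equation*}
\sup_{B(x_i, r_i)} |u| \leq T^{-N(1-\epsilon)} \sup_{B(x_i, T r_i)} |u| \leq A^{-N(1-\epsilon)} \sup_{2Q} |u|.
\end{equation*}
A bounded (depending only on $n$) chain of concentric doublings together with $N(Q) \le 2N$ yields $\sup_{2Q} |u| \le e^{cN}S$, whence $\sup_{B(x_i, r_i)} |u| \le \epsilon_A S$ with $\epsilon_A := e^{cN} A^{-N(1-\epsilon)}$. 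The nearby-doubling comparison lemma from Section~\ref{sec:Tool-Box} lets us replace the varying $r_i$ by a common scale $\rho \asymp R/A$, and the balls $\{B(x_i, \rho)\}_i$ then cover a tubular neighborhood $U$ of $F$ of thickness $\asymp R/A$; hence $\sup_U |u| \le C \epsilon_A S$.

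The second step applies the propagation of smallness. Standard interior gradient estimates on balls of radius $\asymp R/A$ contained in $U$ give $\|\nabla u\|_{L^\infty(F)} \le C(A/R)\epsilon_A S$, so after affine rescaling of $Q^-$ to the standard unit half-ball and normalization of $u$ by $S$, the Cauchy data of $u/S$ on $F$ are bounded by a constant multiple of $\epsilon_A$. Lemma~\ref{lem:Propagation-Smallness} then yields
\begin{equation*}
\sup_{(1/2)Q^-} |u| \leq C_1 \epsilon_A^{\beta} S,
\end{equation*}
with $\beta \in (0,1)$ and $C_1$ depending only on $n, \eta, \Lambda, \Gamma$. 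On the other hand, the contradiction hypothesis $N(Q) \le 2N$, applied through a finite chain of doublings of balls centered near the center of $(1/2)Q^-$ and reaching the point where $|u| = S$, forces $\sup_{(1/2)Q^-} |u| \ge e^{-c_{17} N} S$ for some dimensional constant $c_{17}$. Combining the two estimates produces
\begin{equation*}
e^{-c_{17} N} \leq C_1 \bigl(e^{cN} A^{-N(1-\epsilon)}\bigr)^{\beta},
\end{equation*}
which, once $A > A_0(n)$ is large enough that $\beta(1-\epsilon)\log A > c_{17} + c\beta + 1$, fails for every $N > N_0$; this is the desired contradiction.

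The radius threshold $R_0 = R_0(L)$ serves to ensure that all the tools of Section~\ref{sec:Tool-Box} (almost monotonicity of $\beta(r)$ with $c_3 = 1+\epsilon$, the nearby-doubling comparison, the $L^2 \to L^\infty$ elliptic estimate, and Lemma~\ref{lem:Propagation-Smallness} itself after rescaling) are available at the scales involved. The main technical obstacle is the first step: translating the pointwise doubling hypotheses $N(x_i, r_i) > N$, in which both the centers $x_i \in q_{i,0}$ and the radii $r_i < 10\,\diam(q_{i,0})$ vary with $i$, into a uniform $L^\infty$-smallness estimate on a full tubular neighborhood of $F$ whose rate is a negative power of $A$ multiplied by only a controlled exponential in $N$. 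It is precisely here that the high-doubling (no-$c_6$) form of Lemma~\ref{lem:Doubling-Scaling} and the nearby-doubling comparison are essential, since any factor $A^{+O(1)}$ multiplying the rate, or any exponent in $N$ that cannot be absorbed, would destroy the final inequality.
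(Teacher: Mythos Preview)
Your argument is correct and uses the same three ingredients as the paper---the scaling Lemma~\ref{lem:Doubling-Scaling} to turn high doubling at the $x_i$ into $L^\infty$-smallness near the hyperplane, an interior estimate to control the Cauchy data of $u$ on $F$, and then Lemma~\ref{lem:Propagation-Smallness}. The structure, however, differs in two noteworthy ways.

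\emph{Direct versus contradiction.} The paper does not argue by contradiction. It fixes $M:=\sup_{\frac18 B}|u|$ on a small concentric ball, obtains $\sup_{8q_{i,0}}|u|\le e^{-cN\log A}M$ directly from Lemma~\ref{lem:Doubling-Scaling} (staying entirely inside $Q$), then feeds this into Lemma~\ref{lem:Propagation-Smallness} to get $\sup_{B(p,\frac{1}{256})}|u|\le e^{-c\beta N\log A}M$ for some ball $B(p,\frac{1}{256})\subset\frac{1}{64}B^+$. Since $\frac18 B\subset B(p,\frac12)$, the ratio $\sup_{B(p,1/2)}|u|/\sup_{B(p,1/256)}|u|$ is at least $e^{c\beta N\log A}$, which by Lemma~\ref{lem:Doubling-Scaling} forces $N(p,\tfrac12)\gtrsim N\log A>2N$. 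This is cleaner: by normalizing against $M$ rather than $S=\sup_Q|u|$, the paper never needs the hypothesis $N(Q)\le 2N$ to bound $\sup_{2Q}|u|$ or to produce the lower bound $\sup_{(1/2)Q^-}|u|\ge e^{-cN}S$, both of which your argument requires.

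\emph{Gradient control.} You invoke a pointwise interior gradient estimate $\|\nabla u\|_{L^\infty}\le C(A/R)\|u\|_{L^\infty}$ on balls of radius $\asymp R/A$. This is legitimate here (Lipschitz $a^{ij}$ and bounded $b^i,c$ give $W^{2,p}_{\mathrm{loc}}$ for all $p$, hence $C^{1,\alpha}$), but the paper keeps everything at the $L^2$ level: it uses only the $W^{2,2}$ interior estimate (Lemma~\ref{lem:Sobolev-Regularity}) and a trace inequality (Lemma~\ref{lem:Gradient-Restriction}) to bound $\|u\|_{H^1(F)}+\|\partial_n u\|_{L^2(F)}$, which matches exactly the hypotheses of Lemma~\ref{lem:Propagation-Smallness}. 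The paper's route is slightly more robust and avoids appealing to Calder\'on--Zygmund/Schauder theory.
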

		
		\begin{proof}
			We assume that $ R_0 $ is small enough, so that Lemma \ref{lem:Doubling-Scaling} holds with $ \epsilon = 
			\frac{1}{2} $ and the equation $ (\ref{eq_main_elliptic}) $ at this scale is satisfied along with the conditions $ (\ref{eq:Uniformly-Elliptic}), (\ref{eq:Uniformly-Bounded}), (\ref{eq:Lipschitz-Coef})$. Moreover, at this scale we can also use Lemma \ref{lem:Propagation-Smallness}.\\
			
			To ease notation, without loss of generality by scaling we may assume that $ R = \frac{1}{2}, R_0 \geq \frac{1}{2} $. Let $ B $ be the unit ball centered at $ 0 $. We consider the half ball $ \frac{1}{32} B^+ \subset \frac{1}{8} B$ and wish to apply the propagation of smallness for Cauchy data problems. To this end, we need to bound $ u $ and $ \nabla u $ on $ F := \frac{1}{32} B^+ \cap \{ x_n = 0 \} $.

			\textbf{Step 1 - Bound on $ u $}.
			
			First, let us set
			\begin{equation}
				M := \sup_{\frac{1}{8}B} |u|,
			\end{equation}
			by which we have
			\begin{equation}
				\sup_{B(x_i, \frac{1}{32})} |u| \leq M, \quad \forall x_i \in \frac{1}{16} B.
			\end{equation}
			
			Hence, for $ x_i \in \frac{1}{16} B $, Lemma \ref{lem:Doubling-Scaling} and the assumption that $ N(x_i, r_i) > N $ imply
			\begin{equation}
				\sup_{8 q_{i, 0}} |u| \leq \sup_{B(x_i, \frac{16 \sqrt{n}}{2A+1})} |u| \leq c_{16} \left( \frac{512 \sqrt{n}}{2A+1} \right)^{\frac{N}{2}} \sup_{B(x_i, \frac{1}{32})} |u| \leq e^{-c_{17} N \log A} M,
			\end{equation}
			where $ c_{17} = c_{17}(n) > 0 $ and we have assumed in the last step that $ N, A $ are sufficiently large.

			\textbf{Step 2 - Bound on $ \nabla u $}.
			
			Further, we wish to bound the gradient $ |\nabla u| $. We recall the following facts.
			
			\begin{lemma} \label{lem:Sobolev-Regularity}
				Let $ u $ be a solution of equation $ (\ref{eq_main_elliptic}) $ in a domain $ \Omega $ satisfying the conditions $ (\ref{eq:Uniformly-Elliptic}), (\ref{eq:Uniformly-Bounded}), (\ref{eq:Lipschitz-Coef})$. Then, if $ \Omega' \subset \subset \Omega $, we have
				\begin{equation}
					\|u\|_{W^{2,2}(\Omega')} \leq c_{18} \|u\|_{L^2(\Omega)},
				\end{equation}
				where $ c_{18} > 0 $ depends on the parameters in (\ref{eq:Uniformly-Elliptic}), (\ref{eq:Uniformly-Bounded}), (\ref{eq:Lipschitz-Coef}) and $ d(\Omega', \Omega) $.
			\end{lemma}
			
			For a proof of Lemma \ref{lem:Sobolev-Regularity} we refer to Theorem $ 8.8 $, the remark thereafter and Problem $ 8.2 $, \cite{GT}. We also observe that if $ \Omega', \Omega $ are replaced by small concentric cubes $ Q_r, Q_{2r} $, then by scaling a factor of $ \frac{1}{r^2} $ appears on the right hand side.
			
			\begin{lemma} \label{lem:Gradient-Restriction}
				Let $ u \in W^{2,2}(\mathbb{R}^n) $ and let us consider the trace of $ u $ onto the hyperplane $ \{ x_n = 0 \} \cong \mathbb{R}^{n-1} $ which, abusing of notation, we also denote by $ u $. Then
				\begin{equation}
					\| \nabla u \|_{L^2(\mathbb{R}^{n-1})} \leq c_{19}(\|u\|_{W^{2,2}(\mathbb{R}^n)} + \|u\|_{L^2(\mathbb{R}^{n-1})} ),
				\end{equation}
				where $ c_{19} = c_{19}(n) $.
			\end{lemma}
			
			For a proof of Lemma \ref{lem:Gradient-Restriction} we refer to Lemma $ 23 $, \cite{BL}. Using Lemma \ref{lem:Gradient-Restriction} for functions of the form $ \chi u $, where $ \chi $ is a standard smooth cut-off function and $ u \in W^{2,2} $ we see that			
			\begin{equation}
				\| \nabla u \|_{L^2(\mathbb{R}^{n-1} \cap B_r )} \leq c_{19}(\|u\|_{W^{2,2}(B_{2r})} + \|u\|_{L^2(\mathbb{R}^{n-1} \cap B_{2r})} ),
			\end{equation}
			
			where $ \chi $ is supported in $ B_{2r} $.
			
			We now recall the following standard Sobolev trace estimate (see \cite{E}, Section 5.5, Theorem 1). If $U$ is bounded and $\partial U $ is $C^1$, then there holds
			\begin{equation}
			||u||_{W^{\frac{3}{2}, 2}(\partial U)} \leq c_{20} ||u||_{W^{2,2}(U)},
			\end{equation}
			where the positive constant $c_{20}$ depends only on the domain $U$. We notice that $$\text{dist}(4q_{i,0}, \partial (8 q_{i,0})) = \frac{4}{2A+1},$$ so that,
			using the last lemmas along with the trace estimate, we have
			\begin{align}
				\|\nabla u\|_{L^2(F \cap q_{i,0})} & \leq c_{19} \left( \frac{2A+1}{4}\right) (\|u\|_{W^{2,2}(2 q_{i,0})} + \|u\|_{L^2(F \cap 2 q_{i,0})} ) \\ &\leq (2c_{19}c_{20}) \left( \frac{2A+1}{4}\right)^3 \|u\|_{W^{2,2}(4q_{i,0})} \leq c_{21} \left( \frac{2A+1}{4}\right)^5 \|u\|_{L^2(8q_{i,0})}.
			\end{align}
			Here, $c_{21} = 2 (c_{18}c_{19}c_{20})$.
			Again using the trace estimate, this shows that
			\begin{align}
				\|u\|_{W^{1,2}(F \cap q_{i,0})} + \| \partial_n u \|_{L^2(F \cap q_{i,0})} &\leq c_{20} \left( \frac{2A+1}{4}\right)^2 \|u\|_{W^{2,2}(4q_{i,0})} + \|\nabla u\|_{L^2(F \cap q_{i,0})}  \\
				&\leq (c_{20}c_{21}) \left( \frac{2A+1}{4}\right)^5 \|u\|_{L^2(8q_{i,0})} \\ 
				&\leq \frac{c_{22}}{(2A+1)^{n}} \left( \frac{2A+1}{4}\right)^5 \sup_{8q_{i,0}} |u|.
			\end{align}
			
			Summing up over the cubes $ q_{i,0} $ and using the bound in the first step, we get
			\begin{equation}
				\|u\|_{W^{1,2}(F)} + \| \partial_n u \|_{L^2(F)} \leq \frac{c_{23}}{(2A+1)^{n-2}} \left( \frac{2A+1}{4}\right)^5 \sup_{8q_{i,0}} |u| \leq e^{-c_{17} N \log A} M.
			\end{equation}
			
			\textbf{Step 3 - Propagation of smallness}.
			
			Let us observe that
			\begin{equation}
				\|u\|_{L^2(\frac{1}{32} B^+)} \leq c_{24} M.
			\end{equation}
			and set
			\begin{equation}
				v := \frac{u}{c_{24} M},
			\end{equation}
			by which we have
			\begin{equation}
				\|v\|_{L^2(\frac{1}{32} B^+)} \leq 1.
			\end{equation}
			
			Hence, by the bounds in Steps $ 1 $ and $ 2 $ and propagation of smallness from Lemma \ref{lem:Propagation-Smallness} we get
			\begin{equation}
				\|v\|_{L^2(\frac{1}{64} B^+)} \leq \epsilon^\beta,
			\end{equation}
			where $ \epsilon = e^{-c_{17} N \log A} $.
			
			Let us select a ball $ B(p,\frac{1}{256}) \subset \frac{1}{64} B^+ $ and observe that by (\ref{eq:Elliptic-Regularity})
			\begin{equation}
				\sup_{B(p,\frac{1}{256})} |v| \leq \epsilon^\beta,
			\end{equation}
			which implies
			\begin{equation}
				\sup_{B(p,\frac{1}{256})} |u| \leq  e^{-c_{25} \beta N \log A} M.
			\end{equation}
			
			Moreover, as $ \frac{1}{8} B \subset B(p, \frac{1}{2}) $, we have by definition $ \sup_{B(p, \frac{1}{2})} |u| \geq M $. This implies
			\begin{equation}
				\frac{\sup_{B(p, \frac{1}{2})} |u|}{\sup_{B(p, \frac{1}{256})} |u|} \geq e^{c_{25} \beta N \log A}.
			\end{equation}
			Finally, applying the doubling scaling Lemma \ref{lem:Doubling-Scaling} we have
			\begin{equation}
				\frac{\sup_{B(p, \frac{1}{2})} |u|}{\sup_{B(p, \frac{1}{256})} |u|} \leq (128)^{\tilde{N}/2},
			\end{equation}
			where $ \tilde{N} $ is the doubling index for $ B(p, \frac{1}{2}) $. Therefore,
			\begin{equation}
				\tilde{N} \geq c_{26} N \log A \geq 2N,
			\end{equation}
			where $ A $ is assumed to be sufficiently large.
		\end{proof}
	\section{Counting Good/Bad cubes and application to nodal geometry}
	
	Using the results of Section \ref{sec:Additivity-Frequency}, one can deduce the following result.
	
	\begin{theorem}
		There exist constants $ c > 0 $, an integer $ A $ depending on the dimension $ d $ only and positive numbers $ N_0 = N_0(M, g), r = r(M, g) $ such that for any cube $ Q \in B(p, r) $ the following holds:
		
		If $ Q $ is partitioned into $ A^n $ equal sub-cubes $ q_i $, then
		\begin{equation}
			\# \{q_i| N(q_i) \geq \max( \frac{N(Q)}{1+c}, N_0)  \} \leq \frac{A^{n-1}}{2}.
		\end{equation}
	\end{theorem}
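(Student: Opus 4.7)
The plan is to argue by contradiction, leveraging the two propositions of Section~\ref{sec:Additivity-Frequency} via a combinatorial dichotomy on the distribution of ``bad'' sub-cubes. Suppose, for contradiction, that strictly more than $A^{n-1}/2$ sub-cubes $q_i$ satisfy $N(q_i) \geq N^{*}$, where $N^{*} := \max(N(Q)/(1+c), N_0)$. By the very definition of the uniform doubling index $N(q_i)$, each such cube admits a witness pair $(x_i, r_i)$ with $x_i \in q_i$, $r_i \in (0, \diam q_i)$, and $N(x_i, r_i) \geq N^{*}$. I would fix $r$ small and $N_0$ large so that the smallness/threshold requirements of Lemma~\ref{lem:Doubling-Scaling} and both propositions of Section~\ref{sec:Additivity-Frequency} hold for balls inside $B(p, r) \supset Q$, and take $c \in (0, 1)$ no larger than the constant produced by the barycenter accumulation proposition.

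The heart of the argument is a geometric dichotomy: either \textbf{(H)} some coordinate direction admits a single layer (``slab'') of sub-cubes in $Q$ perpendicular to it such that every one of the $A^{n-1}$ sub-cubes meeting this slab is bad; or \textbf{(S)} one can extract $n+1$ bad sub-cubes whose centers form a simplex $S$ with relative width $w(S) \geq w_0(n)$ and diameter $\diam S \leq \diam(Q)/C$, where $C$ denotes the constant from the barycenter accumulation proposition. In case (H), the propagation of smallness proposition directly gives $N(Q) > 2 N^{*} \geq 2 N(Q)/(1+c) > N(Q)$, which is absurd since $c<1$. In case (S), the barycenter accumulation proposition, applied to the balls $B_j := B(x_{i_j}, r_{i_j})$ (whose radii are at most $\diam Q/A$, hence at most $\rho/2$ once $A$ is large), produces
\begin{equation*}
  N(x_0, C \diam S) > (1 + c) N^{*} \geq N(Q);
\end{equation*}
but $x_0 \in Q$ by convexity and $C \diam S \leq \diam Q$, so this contradicts the definition of $N(Q)$ as a supremum over such balls.

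The main obstacle is establishing the dichotomy (H)/(S) with a quantitative bound fixing the integer $A = A(n)$. My plan is to partition $Q$ into roughly $(C\sqrt{n})^n$ macro-blocks and run pigeonhole on the set of bad sub-cubes: if (H) fails, then in each coordinate direction the bad sub-cubes cannot be confined to a single layer, and a second pigeonhole inside a macro-block containing many bad sub-cubes forces a configuration of $n+1$ bad sub-cube centers in general position. Standard affine geometry then converts such a configuration into a simplex with relative width bounded below by a dimensional constant $w_0(n)$ and diameter at most $\diam(Q)/C$. This combinatorial extraction, an adaptation of Logunov's analogous step in \cite{L}, is the delicate part; the rest is assembly of the already-developed machinery.
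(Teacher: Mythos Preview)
The paper does not prove this theorem itself; it simply refers to Theorem~5.1 of \cite{L}. Your contradiction strategy built on the two propositions of Section~\ref{sec:Additivity-Frequency} is indeed Logunov's scheme, and your handling of cases (H) and (S), \emph{once they are reached}, is essentially correct. The gap is that the dichotomy (H)/(S) is not exhaustive, and the bridge you propose---``if (H) fails, then in each coordinate direction the bad sub-cubes cannot be confined to a single layer''---is false. The negation of (H) says only that no layer is \emph{entirely} bad; it does not force the bad cubes to spread across layers. Concretely, take $n=2$, $A$ large, and let the bad cubes be $\lfloor A/2\rfloor+1$ consecutive cubes in the bottom row. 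Then (H) fails (the bottom row still has good cubes, and no column has more than one bad cube), yet (S) also fails because all bad centers are collinear and every triangle they span has relative width zero. Your macro-block pigeonhole does not rescue this: within each macro-block the bad cubes remain collinear, so no $(n+1)$-tuple in general position appears.

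Logunov's actual dichotomy in \cite{L} is different: either the set of bad centers is \emph{not} contained in any slab of width a fixed small fraction of the side of $Q$---in which case a combinatorial lemma produces a simplex with relative width $\geq w_0(n)$ and the barycenter proposition applies---or it \emph{is} contained in such a slab. In the slab case one cannot invoke the hyperplane proposition directly (the slab need not be fully bad); instead one passes to a smaller cube centered on the slab and iterates the whole argument, with $A$ chosen as a large power so that the iteration terminates before the cubes become too small. It is this multi-scale iteration, not a single pigeonhole on macro-blocks, that constitutes the ``delicate part,'' and your sketch does not yet capture it.
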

	
	The proof is combinatorial in nature and we refer to Theorem $ 5.1 $, \cite{L} for complete details. As an application of the previous theorem, we also have our main theorem
	
	\begin{theorem}\label{thm:Main}
		There exist positive numbers $ r_0 = r_0(M, g), c = c(M, g) $ and $ \alpha = \alpha(n) $ such that for any solution $ u $ of equation $ (\ref{eq_main_elliptic}) $ in a domain $ \Omega $ satisfying the conditions $ (\ref{eq:Uniformly-Elliptic}), (\ref{eq:Uniformly-Bounded}), (\ref{eq:Lipschitz-Coef})$, we have
		\begin{equation}
			\mathcal{H}^{n-1}(\{u = 0\} \cap Q) \leq c \diam^{n-1}(Q) N^\alpha (Q),
		\end{equation}
		where $ Q \subset B(p, r_0) $ is an arbitrary cube in $\Omega$.
	\end{theorem}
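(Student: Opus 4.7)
The strategy is to iterate the counting theorem stated just above to produce a polynomial-in-$N(Q)$ bound by induction on the doubling index. Define
\begin{equation*}
F(N) := \sup\left\{\frac{\mathcal{H}^{n-1}(Z_u \cap Q)}{\diam^{n-1}(Q)} : Q \subset B(p, r_0),\ N(Q) \leq N\right\},
\end{equation*}
where the supremum runs over cubes $Q$ in $B(p, r_0)$. The goal is to establish $F(N) \leq C N^{\alpha}$ with $\alpha = \log(2A)/\log(1+c)$, with $A$ and $c$ as in the counting theorem, and then to argue that $F(N_0)$ is finite.

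For the inductive step, fix $N > N_0$ and a cube $Q$ with $N(Q) \leq N$. Partition $Q$ into $A^n$ congruent sub-cubes $\{q_i\}$ of diameter $\diam(Q)/A$. The counting theorem separates them into at most $A^{n-1}/2$ \emph{bad} sub-cubes, for which we only know $N(q_i) \leq N$, and at most $A^n$ \emph{good} sub-cubes, for which $N(q_i) \leq \max(N/(1+c), N_0)$. Subadditivity of $\mathcal{H}^{n-1}$ yields
\begin{align*}
\mathcal{H}^{n-1}(Z_u \cap Q) &\leq \sum_{\text{bad}} F(N)\,\diam^{n-1}(q_i) + \sum_{\text{good}} F\bigl(\max(N/(1+c), N_0)\bigr)\,\diam^{n-1}(q_i) \\
&\leq \tfrac{1}{2} F(N)\,\diam^{n-1}(Q) + A \cdot F\bigl(\max(N/(1+c), N_0)\bigr)\,\diam^{n-1}(Q).
\end{align*}
Dividing through by $\diam^{n-1}(Q)$, taking the supremum over admissible $Q$, and absorbing the $\tfrac{1}{2}F(N)$ to the left produces the master recursion
\begin{equation*}
F(N) \leq 2A \cdot F\bigl(\max(N/(1+c),\, N_0)\bigr).
\end{equation*}

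Iterating this $k := \lceil \log(N/N_0)/\log(1+c) \rceil$ times terminates at the base scale $N_0$ and gives $F(N) \leq (2A)^{k} F(N_0) \leq 2A \cdot F(N_0) N_0^{-\alpha} \cdot N^{\alpha}$, with $\alpha = \log(2A)/\log(1+c)$ depending only on $n$. Combined with the trivial bound $F(N) \leq F(N_0)$ for $N \leq N_0$, this yields the desired estimate with $c(M,g) := 2A\, F(N_0) N_0^{-\alpha}$.

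The main analytic obstacle is the finiteness of the base value $F(N_0)$. I would argue it via a compactness-plus-Hardt--Simon argument: after the normalization $u \mapsto u/\sup_Q|u|$, the elliptic estimates from Section~\ref{sec:Tool-Box} together with the Lipschitz hypothesis on the leading coefficients give uniform local $W^{2,2}$ (hence $C^{1,\beta}$) bounds, so any sequence of normalized solutions with $N(Q) \leq N_0$ on cubes $Q \subset B(p, r_0)$ is pre-compact and admits a $C^1$-convergent subsequence (with coefficients also extracted along a subsequence). A limit solves an elliptic equation of the same type, so by Hardt--Simon its nodal set has finite $\mathcal{H}^{n-1}$ measure; a semicontinuity argument for nodal sets along $C^1$-convergent solutions, together with the uniform doubling bound, transfers a uniform finite bound back to the pre-limit solutions. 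Justifying this nodal semicontinuity in the variable-coefficient setting is the delicate step; once in hand, the combinatorial recursion above closes the proof.
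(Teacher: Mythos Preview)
Your iterative scheme is exactly the mechanism of Logunov's Theorem~6.1 (which the paper defers to), and the arithmetic on the good/bad splitting is correct. Two points need attention.

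First, absorbing $\tfrac12 F(N)$ to the left presupposes $F(N)<\infty$; nothing you have written rules out $F(N)=\infty$, in which case the recursion is vacuous. Second, and relatedly, your compactness-plus-semicontinuity route to $F(N_0)<\infty$ is neither the approach in \cite{L} nor readily available: the Sobolev step $W^{2,2}\hookrightarrow C^{1,\beta}$ already fails for $n\ge 4$, and even granting the correct $C^{1,\beta}$ compactness (obtainable from $W^{2,p}$ estimates for the non-divergence form of the equation, since the leading coefficients are Lipschitz), upper semicontinuity of $\mathcal{H}^{n-1}(Z_{u_k})$ under $C^1$ convergence is not a standard lemma and is genuinely delicate near critical zeros of the limit.

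Both points are resolved at once by the observation that for equations of type \eqref{eq_main_elliptic} under \eqref{eq:Uniformly-Elliptic}--\eqref{eq:Lipschitz-Coef} there already exist effective a priori bounds $\mathcal{H}^{n-1}(Z_u\cap Q)\le C(N(Q))\,\diam^{n-1}(Q)$ with $C(\cdot)$ at worst exponential; see \cite{HS}, \cite{Li}, and especially \cite{HL}. This furnishes $F(N)<\infty$ for every $N$, and in particular $F(N_0)<\infty$. The content of Theorem~\ref{thm:Main} is precisely that your recursion upgrades that crude bound to a polynomial one.
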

	
	For details, we refer to Theorem 6.1, \cite{L}.
	
	\section{Application to Steklov eigenfunctions}\label{sec:Steklov}
	
	Our goal is to transform a solution $\phi_\lambda$ to the Steklov problem (\ref{Eq:main-Steklov}) on a manifold $M$ into a solution $u$ to Equation (\ref{eq_main_elliptic}) on some domain $\Omega \subset \mathbb{R}^n$. 
	
	\subsection{Getting rid of the boundary}
	
	There exists a procedure (see \cite{BL, Zhu1, Zhu2}) to transform $M$ into a compact manifold without boundary, which we highlight here. We first let $d(x) := \text{dist}(x, \partial M)$ be the distance between a point $x \in M$ and the boundary. We then define 

\begin{equation}
\delta(x)=
\begin{cases}
d(x) & x  \in M_\rho,\\
l(x) & x \in M \setminus M_\rho,
\end{cases}
\end{equation}
where $\rho = \rho(M) > 0$ is such that $d(x)$ is smooth in a $\rho$ neighborhood $M_\rho$ of $\partial M$ in $M$. We choose $l \in C^\infty(M \setminus M_\rho)$ in such a way that makes $\delta$ smooth on $M$. It now follows that 
\begin{equation}\label{eq:ext-steklov-egnfunction}
v(x) := e^{\lambda \delta(x)}  \phi_\lambda(x),
\end{equation}
 identifies with $\phi_\lambda$ on $M$  and satisfies a Neumann boundary condition. More precisely, $v$ solves
\begin{equation}
\begin{cases}
\Delta_g v + b(x) \cdot \nabla_g v + q(x) v = 0 & \text{in } M, \\
\partial_\nu v = 0 & \text{on } \partial M,
\end{cases}
\end{equation}
where $\displaystyle \nu = - \nabla \delta $ is the unit outward normal and with
\begin{equation}
\begin{cases}
b(x) = -2 \lambda \nabla_g \delta(x), \\
q(x) = \lambda^2 | \nabla \delta(x)|^2 - \lambda \Delta_g \delta(x).
\end{cases}
\end{equation}
The fact that $v$ satisfies a Neumann boundary condition now allows us to get rid of the boundary by gluing to copies of $M$ together along the boundary and extend $v$ in the natural way. Denote by $\bar{M} = M \cup M$ the compact boundaryless manifold obtained by doing so. We remark that the induced metric $\bar{g}_{ij}$ on $\bar{M}$ is Lipschitz on $\partial M$. Using the canonical isometric involution that interchanges the two copies $M$ of $\bar{M}$, we can then extend $v, b$ and $q$ to $\bar{M}$. Abusing notation and writing $v$ for the extension, we obtain that $v$ satisfies the elliptic equation
\begin{equation}\label{eq_main_eq}
\Delta_{\bar{g}} v + \bar{b} (x) \cdot \nabla_{\bar{g}} v + \bar{q}(x) v = 0
\end{equation}
in $\bar{M}$ and we have the following bounds 
\begin{equation}\label{eq:steklov-coeff-bounds}
\begin{cases}
 || \bar{b} ||_{W^{1, \infty}(N)} \leq C \lambda, \\
 || \bar{q} ||_{W^{1, \infty}(N)} \leq C \lambda^2. \\
\end{cases}
\end{equation}

Fix a point $O$ in $\bar{M}$. In local coordinates around $O$, we have 
\begin{equation}\label{eq:laplace-beltrami-local}
\Delta_{\bar{g}}f = \frac{1}{\sqrt{|\bar{g}|}} \partial_i (\sqrt{|\bar{g}|}\bar{g}^{ij} \partial_j f), \quad (\nabla_{\bar{g}} f )^i = \bar{g}^{ij}\partial_j f.
\end{equation}
where $\sqrt{|\bar{g}|}$ is the determinant of the extended metric tensor $\bar{g}$. Since the extended metric is Lipschitz and recalling the boundedness of $\bar{b}$ and $\bar{q}$, it then  follows that $v$ is a solution of Equation (\ref{eq_main_elliptic}) with $L$ satisfying the conditions (\ref{eq:Uniformly-Elliptic}, \ref{eq:Uniformly-Bounded},  \ref{eq:Lipschitz-Coef}). \\

In order to get uniform control over the coefficients, we now work at wavelength scale and  consider the ball $B(x_0, 1/ \lambda) \subset \bar{M}$. We introduce $$v_{x_0, \lambda}(x) := v(x_0 + \frac{x}{\lambda}),$$ for $x \in B(0, 1)$. Then, $v_{x_0, \lambda}$ satisfies Equation (\ref{eq_main_elliptic}) where the coefficients $(a^{ij}), b^i$ and $c$ are uniformly bounded in $L^\infty$ by a constant not depending on $\lambda$. Moreover, the ellipticity constant of the $(a^{ij})$ does not change and the Lipschitz constant $\Gamma$ can only improve. In clear, the family of $v_{x_0, \lambda}$ solves Equation \ref{eq_main_elliptic} and satisfies the conditions $ (\ref{eq:Uniformly-Elliptic}), (\ref{eq:Uniformly-Bounded}), (\ref{eq:Lipschitz-Coef})$ without any dependence on $\lambda$. In what follows, we will thus be able to apply Theorem \ref{thm:Main} uniformly on this family. For more details on the above, we refer the reader to Section 3.2 of \cite{BL}.

\subsection{Upper bound for the nodal set}

\begin{remark}
Many of the results needed we collect in this subsection work only within a small enough scale $r < r_0$. Since we work locally at wavelength scale $r = \frac{1}{\lambda}$, all those results hold for $\lambda$ big enough.
\end{remark}

We now fix a point $x_0$ in $\bar{M}$, let $r_0 = \lambda^{-1}$ and choose normal coordinates in a geodesic ball $B_{\bar{g}} (x_0, r_0)$. Without loss of generality, we assume $r_0$ is smaller than the injectivity radius of $\bar{M}$. For $x,y$ in $B_{\bar{g}}(x_0,r_0)$, we respectively denote the Euclidean and Riemannian distance by $d(x,y)$ and $d_{\bar{g}}(x,y)$. For $\lambda$ big enough, we have
\begin{equation}\label{eq:metric-comp}
d_{\bar{g}}(x,y) \leq 2 d(x,y)
\end{equation}
for any two distinct points $x,y \in B_{\bar{g}}(x_0, r_0)$. By construction, the nodal sets of the eigenfunction $\phi_\lambda$ and its extension $v$ coincide in $M$. Combining this observation with Equation (\ref{eq:metric-comp}) allows to compare the size of the corresponding nodal sets on small balls. Indeed,  for any $r < r_0/2$, one has
\begin{equation}\label{eq:nodal-comp}
\mathcal{H}^{n-1}(Z_{\phi_\lambda} \cap B_{\bar{g}}(O, r)) \leq \mathcal{H}^{n-1}(Z_v \cap B(x, 2r )) 
\end{equation}
Denoting by $Z_{v_{x_0, \lambda}}$ the nodal set of $v_{x_0, \lambda}$, we then remark that
\begin{equation}
\mathcal{H}^{n-1}(Z_v \cap B(x, 2r )) \leq \lambda^{1-n} \mathcal{H}^{n-1}(Z_{v_{x_0, \lambda}})
\end{equation}

Also, by Proposition 1 in \cite{Zhu2}, there exists $c_1 > 0$ such that the doubling index of $N_{x_0, \lambda}(x, r)$ of $v_{x_0, \lambda}$ on the ball $B(x,r) \subset B(0,1)$ satisfies 
\begin{equation}\label{eq:zhu-growth-bound}
N_{x_0, \lambda}(x,r) \leq c_1 \lambda
\end{equation}
for any $r < r_0$. We choose $r < r_0/4$ and let $Q$ be the cube centered at origin and of side length $r$ so that the above now implies 
\begin{equation}
N_{x_0, \lambda}(Q) = \sup_{x \in Q, r \in (0, \diam(Q))} N_{x_0, \lambda}((x,r) \leq c_1 \lambda.
\end{equation}
Collecting all of the above, noticing that $B(0,2r) \subset Q$ and using Theorem \ref{thm:Main}, we finally get that 
\begin{align*}
\mathcal{H}^{n-1}(Z(\phi_\lambda) \cap B_{\bar{g}}(x_0, r)) &\leq \lambda^{1-n} \mathcal{H}^{n-1}(Z_{v_{x_0, \lambda}} \cap Q) \\
&\leq c_1(n) \lambda^{1-n}  N^\alpha(Q) \\
&\leq c_2(n) \lambda^{\alpha - n + 1}.
\end{align*}

Covering $M$ with $\sim \lambda^{n}$ balls $B(x_0,r)$ of radius $r = \frac{1}{4\lambda}$ finally yields 
\begin{equation}
\mathcal{H}^{n-1}(Z_\lambda) \leq c \lambda^{\alpha+1}
\end{equation}
and thus concludes the proof of Theorem \ref{thm:Steklov-Bound}.


\end{document}